\newcommand\version{March 18, 2020}
\newcommand{\1}{\mathbbm{1}}
\newcommand{\R}{\mathbb{R}} 
\newcommand{\N}{\mathbb{N}} 
\newcommand*\diff{\mathop{}\!\mathrm{d}} 
\newcommand{\Sph}{{{\mathbb S}^n}}
\newcommand{\Ste}{{\mathcal S}}
\newcommand{\Slxi}{{\Sigma_{\lambda, \xi_0}}}
\newcommand{\lx}{{\lambda, x_0}}
\newcommand{\lxi}{{\lambda, \xi_0}}
\DeclareMathOperator{\supp}{supp}
\newtheorem{theorem}{Theorem}
\newtheorem{proposition}[theorem]{Proposition}
\newtheorem{lemma}[theorem]{Lemma}
\newtheorem*{definition*}{Definition}
\newtheorem{remark}[theorem]{Remark}
\begin{document}

\title[Classification of solutions --- \version]{Classification of solutions of an  equation\\ related to a conformal log Sobolev inequality}

\author{Rupert L. Frank}

\address[Rupert L. Frank]{Mathematisches Institut, Ludwig-Maximilians-Universit\"at M\"unchen, Theresienstr. 39, 80333 M\"unchen, Germany, and Mathematics 253-37, Caltech, Pasa\-de\-na, CA 91125, USA}

\email{r.frank@lmu.de, rlfrank@caltech.edu}

\author{Tobias K\"onig}

\address[Tobias K\"onig]{Mathematisches Institut, Ludwig-Maximilians-Universit\"at M\"unchen, Theresienstr. 39, 80333 M\"unchen, Germany}

\email{tkoenig@math.lmu.de}

\author{Hanli Tang}

\address[Hanli Tang]{School of Mathematical Sciences\\Beijing Normal University\\Beijing, 100875, China}

\email{hltang@bnu.edu.cn}

\thanks{\copyright\, 2020 by the authors. This paper may be reproduced, in its entirety, for non-commercial purposes.\\
	The first author is grateful to M.~Zhu for a correspondence in May 2012 on the topic of this paper.\\
	Partial support through US National Science Foundation grant DMS-1363432 (R.L.F.), Studienstiftung des deutschen Volkes (T.K.)  and National Natural Science Foundation of China (Grant No.11701032) (H.L.T.) is acknowledged.}

\begin{abstract}
	We classify all finite energy solutions of an equation which arises as the Euler--Lagrange equation of a conformally invariant logarithmic Sobolev inequality on the sphere due to Beckner. Our proof uses an extension of the method of moving spheres from $\R^n$ to $\Sph$ and a classification result of Li and Zhu. Along the way we prove a small volume maximum principle and a strong maximum principle for the underlying operator which is closely related to the logarithmic Laplacian.
\end{abstract}

\maketitle

\section{Introduction}

\subsection{Main result}

The motivation of this paper is Beckner's logarithmic Sobolev inequality on $\Sph$ with sharp constant \cite{Be1992,Be1997}. It states that
\begin{equation}
\label{beckner log sob}
\iint_{\Sph \times \Sph} \frac{|v(\omega) - v(\eta)|^2}{|\omega - \eta|^n} \, \diff\omega \diff\eta  \geq C_n \int_{\Sph} |v(\omega)|^2 \ln \frac{|v(\omega)|^2 |\Sph|}{\|v\|_2^2} \diff\omega
\end{equation}
with
\begin{equation}
\label{eq:becknerconst}
C_n = \frac{4}{n} \frac{\pi^{n/2}}{\Gamma(n/2)} \,.
\end{equation}
Here and in the following, $\diff \omega$ denotes the surface measure induced by the embedding of $\Sph$ in $\R^{n+1}$, i.e., $\int_\Sph \diff \omega = |\Sph| = 2 \pi^\frac{n+1}{2} / \Gamma (\frac{n+1}{2} )$.

Note that, by Jensen's inequality and convexity of $x\mapsto x\ln x$, the right side of \eqref{beckner log sob} is nonnegative and vanishes if and only if $|v|$ is constant. Inequality \eqref{beckner log sob} is a limiting form of the Sobolev inequalities and, in the spirit of these inequalities, it states that functions with some regularity (quantified by the finiteness of the left side) have some improved integrabiliy properties (quantified by the finiteness of the right side). Beckner used inequality \eqref{beckner log sob} to prove an optimal hypercontractivity bound for the Poisson semigroup on the sphere. A remarkable feature of inequality \eqref{beckner log sob} is its conformal invariance, which we will discuss below in detail.

In \cite{Be1997} Beckner showed that equality holds in \eqref{beckner log sob} if and only if
\begin{eqnarray}
\label{eq:beckneropt}
v(\omega) = c \left( \frac{\sqrt{1-|\zeta|^2}}{1-\zeta\cdot\omega} \right)^{n/2}
\end{eqnarray}
for some $\zeta\in\R^{n+1}$ with $|\zeta|<1$ and some $c\in\R$.

Our goal in this paper is to classify all nonnegative solutions $u$ of the equation
\begin{equation}
\label{equation 1}
P.V. \int_{\Sph} \frac{u(\omega) - u(\eta)}{|\omega - \eta|^n} \diff \eta = C_n u(\omega) \ln u(\omega)
\qquad\text{in}\ \Sph.
\end{equation}
This equation arises, after a suitable normalization, as the Euler--Lagrange equation of the optimization problem corresponding to \eqref{beckner log sob}.

Because of the principal value in \eqref{equation 1} we interpret this  equation in the weak sense. The maximal class of functions for which \eqref{beckner log sob} holds is
$$
\mathcal D := \left\{ v \in L^2(\Sph) :\ \iint_{\Sph \times \Sph} \frac{|v(\omega) - v(\eta)|^2}{|\omega - \eta|^n}\, \diff\omega\diff\eta <\infty \right\}.
$$

We say that a nonnegative function $u\in\mathcal D$ on $\Sph$ is a \emph{weak solution} of \eqref{equation 1} if
$$
\frac{1}{2}\iint_{\Sph\times\Sph}\frac{(\varphi(\omega)-\varphi(\eta))\,(u(\omega)-u(\eta))}{|\omega-\eta|^n}\,\diff\omega\diff\eta = C_n \int_\Sph \varphi(\omega)\, u(\omega) \ln u(\omega)  \diff \omega
$$
for every $\varphi \in \mathcal D$.

Clearly, the constant function $u\equiv 1$ is a weak solution of \eqref{equation 1}. Because of the conformal invariance, which equation \eqref{equation 1} inherits from inequality \eqref{beckner log sob}, see Lemma \ref{lemma conf inv}, all elements in the orbit of the constant function $u\equiv 1$ under the conformal group are also weak solutions. One can show that these are precisely the functions of the form \eqref{eq:beckneropt} with $c=1$. Our main result is that these are \emph{all} the finite energy solutions of \eqref{equation 1}.

\begin{theorem}
	\label{theorem classification}
	Let $0 \nequiv u\in \mathcal D$ be a nonnegative weak solution of equation \eqref{equation 1}. Then
	\[ u (\omega) = \left( \frac{\sqrt{1- |\zeta|^2}}{1 - \zeta \cdot \omega} \right)^{n/2}  \]
	for some $\zeta \in \R^{n+1}$ with $|\zeta| < 1$.
\end{theorem}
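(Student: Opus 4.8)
The plan is to transport the problem from $\Sph$ to $\R^n$ via stereographic projection, where the nonlocal operator on the left of \eqref{equation 1} becomes (a multiple of) the logarithmic Laplacian, and then to run a moving-spheres argument. Under stereographic projection from a pole, a solution $u$ on $\Sph$ corresponds to a function $w$ on $\R^n$; because of the conformal covariance recorded in Lemma \ref{lemma conf inv}, the equation for $w$ should again be of log-Laplacian type with a logarithmic right-hand side. The first step is therefore to write down this transformed equation carefully, keeping track of the Jacobian factors $\left( \frac{2}{1+|x|^2} \right)^{n/2}$ and the extra additive logarithmic terms they generate (these are exactly the kind of terms that make the log-Laplacian equation, unlike the pure fractional one, only \emph{conformally covariant up to lower-order corrections}). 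I expect the cleanest route is to work directly on $\Sph$ using conformal transformations $\Slxi$ built from the isometries and dilations of $\Sph$, rather than literally projecting; the two viewpoints are equivalent.

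Next I would set up the method of moving spheres on $\Sph$. For a point $\xi_0 \in \Sph$ and $\lambda > 0$, let $u_{\lxi}$ denote the conformal transform of $u$ by the "inversion" centered at $\xi_0$ at scale $\lambda$, normalized so that it again solves \eqref{equation 1}. The comparison function $v_{\lxi} := u_{\lxi} - u$ then satisfies a \emph{linear} nonlocal inequality obtained from the mean value theorem applied to $t \mapsto t \ln t$: schematically,
\[
\mathrm{P.V.} \int_{\Sph} \frac{v_{\lxi}(\omega) - v_{\lxi}(\eta)}{|\omega - \eta|^n}\,\diff\eta \;=\; C_n\, c(\omega)\, v_{\lxi}(\omega),
\]
where $c(\omega)$ lies between $1 + \ln u(\omega)$ and $1 + \ln u_{\lxi}(\omega)$ and is in particular \emph{bounded} on the relevant region once we know $u$ is bounded above and below there. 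The strategy is the standard one: show that for $\lambda$ small (spheres close to $\xi_0$) one has $v_{\lxi} \geq 0$ on the exterior region, then increase $\lambda$ and let
\[
\bar\lambda(\xi_0) := \sup\{\lambda > 0 :\ v_{\mu,\xi_0} \geq 0 \text{ on the exterior for all } \mu \leq \lambda\}.
\]
If $\bar\lambda(\xi_0) = \infty$ for every $\xi_0$, a calibration/ODE argument forces $u$ to be constant; if $\bar\lambda(\xi_0) < \infty$ for some $\xi_0$, then at $\lambda = \bar\lambda(\xi_0)$ one must have $v_{\bar\lambda, \xi_0} \equiv 0$, which is precisely the functional equation whose solutions are classified — here is where I would invoke the Li--Zhu classification result mentioned in the abstract to conclude that $u$ has the claimed form.

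The two technical pillars that make the moving-spheres machine run are exactly the auxiliary results flagged in the abstract, and proving them is where I expect the real work to be. First, one needs a \emph{small-volume maximum principle}: if $v_{\lxi}$ satisfies the linear nonlocal inequality above with bounded coefficient $c$ on a region $\Omega \subset \Sph$ of sufficiently small measure, and $v_{\lxi} \geq 0$ outside $\Omega$, then $v_{\lxi} \geq 0$ in $\Omega$ — this is what lets us start the procedure and propagate it. The subtlety compared to the fractional Laplacian case is that the kernel $|\omega - \eta|^{-n}$ is \emph{not} integrable, so the "maximum principle" must be extracted from the weak formulation (testing with $(v_{\lxi})_-$) together with the coercivity encoded in Beckner's inequality \eqref{beckner log sob}, and the smallness of $|\Omega|$ must beat the logarithmic loss. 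Second, one needs a \emph{strong maximum principle} to upgrade $v_{\bar\lambda, \xi_0} \geq 0$, $\not\equiv 0$ to $v_{\bar\lambda,\xi_0} > 0$, so that the supremum defining $\bar\lambda$ can be strictly increased unless $v \equiv 0$; this follows from the strict positivity of the kernel. A further nontrivial ingredient is the a priori regularity and two-sided bounds on $u$ needed to make $c(\omega)$ genuinely bounded on the moving region — one must rule out $u$ vanishing or blowing up, presumably via a bootstrap starting from $u \in \mathcal D \subset L^2$ and the equation itself. Assembling these pieces — small-volume MP to start, strong MP to run, Li--Zhu to finish — yields the theorem.
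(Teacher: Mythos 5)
Your outline agrees with the paper's: conformal covariance of the equation, moving spheres on $\Sph$, a small-volume maximum principle to start, a strong maximum principle to advance, and the Li--Zhu classification to finish. But there is a genuine gap at exactly the step you flag as ``a further nontrivial ingredient.''

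You linearize $t\mapsto C_n t\ln t$ and obtain a coefficient $c(\omega)$ lying between $1+\ln u(\omega)$ and $1+\ln u_{\lxi}(\omega)$, then say it is ``bounded on the relevant region once we know $u$ is bounded above and below there,'' proposing to get such two-sided bounds ``presumably via a bootstrap.'' This is precisely what the paper could \emph{not} do: they state explicitly that the regularity results for the logarithmic Laplacian do not yield continuity, let alone two-sided bounds, for $u\in\mathcal D$, and that this is why the whole moving-spheres argument must be run in the energy space. The paper's small-volume maximum principle (Lemma~\ref{lemma max principle narrow region}) therefore does not assume a bounded potential; its hypothesis is the integral condition $\int_\Omega e^{2V_-/C_n}<|\Sph|$, tuned so that it can be verified using only $u\in L^2(\Sph)$. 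To get there, they do not use a single mean-value coefficient: they split by sign of $w_\lxi=u_\lxi-u$ and use convexity of $g(t)=C_nt\ln t$, bounding $hw_\lxi\geq g'(u)w_\lxi$ on $\{u_\lxi<u\}$ and using the universal bound $hw_\lxi\geq -e^{-1}$ on $\{u_\lxi\geq u\}$; the resulting $V$ has $V_-\leq C_n(1+\ln u)_+$, so $e^{2V_-/C_n}\lesssim u^2$, and smallness of $\int_\Omega e^{2V_-/C_n}$ follows from dominated convergence with $u\in L^2$. Without this device, your argument stalls at the unproven bootstrap — and the same issue infects the strong maximum principle, which the paper states under the weak hypothesis $V_+\min\{w,1\}\in L^1_{\rm loc}$ for exactly the same reason.

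Two smaller points. First, since the paper invokes the measure-theoretic Li--Zhu theorem of \cite{FrLi}, it needs \emph{both} the inversion symmetry (for every center) and the reflection symmetry (for every direction), i.e.\ parts (i) and (ii) of Theorem~\ref{theorem symmetry}; you mention only the inversions. Second, the dichotomy you sketch ($\bar\lambda=\infty$ for all $\xi_0$ gives a constant; otherwise the functional equation at a finite critical radius) is not how the paper organizes the endgame: they show directly that $\lambda_0(\xi_0)<\infty$ for every $\xi_0$ by an $L^2$-comparison (integrating $u^2\leq J_{\lambda,\xi_0}u(\Phi_{\lambda,\xi_0}\cdot)^2$), which is simpler and avoids a separate ``constant'' branch. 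These are organizational, but the missing boundedness of $u$ is a real gap that the paper's energy-space maximum principles were designed to circumvent.
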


As we will explain in the next subsection, this result and its proof are in the spirit of similar classification results for conformally invariant equations. Groundbreaking results in the local case were obtained by Gidas, Ni and Nirenberg \cite{GiNiNi} and Caffarelli, Gidas and Spruck \cite{CaGiSp}.
In the nonlocal case similar results were first obtained by Chen, Li and Ou \cite{ChLiOu} and Li \cite{Li04} and we refer to these works for further references.

We follow a general strategy that was pioneered by Li and Zhu \cite{LiZh}; see also \cite{LiZha}. The basic observation there is that a symmetry result together with the conformal invariance of the equation forces the solutions to be of the claimed form. More precisely, the proof proceeds in two steps. In a first step one uses the method of moving planes or its variant, the method of moving spheres, in order to show symmetry of positive solutions. The symmetry in question respects the conformal invariance of the equation. The second step employs a powerful lemma by Li and Zhu \cite{LiZh} which classifies the sufficently regular functions which have the conformal symmetry property established in the first step.

The adaptation of these methods to the present setting, however, encounters several difficulties. One of these comes from the fact that functions in $\mathcal D$ have only a very limited regularity. In fact, the left side of \eqref{beckner log sob} is comparable to
$$
\left(v,\left(\ln(-\Delta_{\Sph}+1)\right) v\right)
$$
with the inner product in $L^2(\Sph)$, see \eqref{eq:energyharmonics} below. Thus, the linear operator on the left side of \eqref{equation 1} is reminiscent of the logarithmic Laplacian, studied recently in \cite{ChWe} on a domain in Euclidean space; see also \cite{DLNoPo} for a related functional. The work \cite{ChWe} contains some regularity results, but we have not been able to use these to deduce that solutions $u$ of \eqref{equation 1} are continuous. Therefore, we need to perform the method of moving spheres in the energy space. While this can be carried out in an elegant and concise way in the case of $(-\Delta)^{\pm s}$ \cite{ChLiOu}, the proof of the corresponding small volume maximum principle in our setting is rather involved and constitutes one of the main achievements in this paper; see Section \ref{section max principles}. The missing regularity also prevents us from directly applying the classification lemma by Li and Zhu \cite{LiZh}. Instead, we use its extension in \cite{FrLi} to measures; see Section \ref{section proof main}.

We believe that the techniques that we develop in this paper can be useful in similar problems and that they illustrate, in particular, how to prove classification theorems in problems with conformal invariance without first establishing regularity results.


\subsection{Background}

In order to put this problem into context, let us recall Lieb's sharp form \cite{Li} of the Hardy--Littlewood--Sobolev inequality, which states that, if $0<\lambda<n$, then for any $f\in L^{2n/(2n-\lambda)}(\R^n)$,
\begin{equation}\label{eq:hls}
\iint_{\R^n\times\R^n} \frac{f(x)\,f(y)}{|x-y|^\lambda}\diff x\diff y \leq \mathcal C_{\lambda,n} \left( \int_{\R^n} |f|^{2n/(2n-\lambda)}\diff x \right)^{(2n-\lambda)/n}
\end{equation}
with
$$
\mathcal C_{\lambda,n} = \pi^{\lambda/2} \frac{\Gamma(\frac{n-\lambda}2)}{\Gamma(n+\frac \lambda 2)} \left( \frac{\Gamma(n)}{\Gamma(\frac n2)} \right)^{1-\lambda/n} \,.
$$
Moreover, equality in \eqref{eq:hls} holds if and only if
\begin{eqnarray}
\label{eq:hlsopt}
f(x) = c \left( \frac {2b}{b^2 + |x-a|^2}\right)^{(2n-\lambda)/2}
\end{eqnarray}
for some $a\in\R^n$, $b>0$ and $c\in\R$. The Euler--Lagrange equation of the optimization problem related to \eqref{eq:hls} reads, in a suitable normalization,
\begin{equation}\label{eq:hlseq}
\int_{\R^n} \frac{f(y)}{|x-y|^\lambda}\diff y = |f(x)|^{-2(n-\lambda)/(2n-\lambda)} f(x)
\qquad\text{in}\ \R^n \,.
\end{equation}
Lieb posed the classification of positive solutions of \eqref{eq:hlseq} as an open problem, which was finally solved by Chen, Li and Ou \cite{ChLiOu} and Li \cite{Li04}. They showed that the only positive solutions in $L^{2n/(2n-\lambda)}_{\rm loc}(\R^n)$ of \eqref{eq:hlseq} are given by \eqref{eq:hlsopt} with $a\in\R^n$, $b>0$ and with a constant $c$ depending only on $\lambda$ and $n$.

Writing $|x-y|^{-\lambda}$ in \eqref{eq:hls} as a constant times $\int_{\R^n} |x-z|^{-(n+\lambda)/2} |z-y|^{-(n+\lambda)/2}\diff z$ and recognizing $|\cdot|^{-(n+\lambda)/2}$ as a constant times the Green's function of $(-\Delta)^{(n-\lambda)/4}$, we see by duality, putting $\lambda = n - 2s$, that \eqref{eq:hls} is equivalent to the sharp Sobolev inequality, namely, if $0 < s < n/2$, then for all $u\in \dot H^s(\R^n)$,
\begin{equation}
\label{eq:sob}
\|(-\Delta)^{s/2} u \|_2^2 \geq \mathcal S_{s,n} \left( \int_{\R^n} |u|^{2n/(n-2s)}\diff x \right)^{(n-2s)/n}
\end{equation}
with
$$
\mathcal S_{s,n} = (4\pi)^s \ \frac{\Gamma(\frac{n+2s}{2})}{\Gamma(\frac{n-2s}{2})} \left( \frac{\Gamma(\frac n2)}{\Gamma(n)} \right)^{2s/n}
= \frac{\Gamma(\frac{n+2s}{2})}{\Gamma(\frac{n-2s}{2})} \ |\Sph|^{2s/n} \,.
$$
Moreover, equality holds if and only if
$$
u(x) = c \left( \frac {2b}{b^2 + |x-a|^2}\right)^{(n-2s)/2} \,.
$$
By integrating the Euler--Lagrange equation corresponding to \eqref{eq:sob} against $|x-y|^{-(n-2s)}$, we obtain \eqref{eq:hlseq} with $f$ replaced by a multiple of $|u|^{4s/(n-2s)}u$. This leads to a classification of all positive solutions in $\dot H^s(\R^n)$ of the corresponding Euler--Lagrange equation \cite{ChLiOu}.

A crucial step in Lieb's proof of the sharp inequality \eqref{eq:hls} and the classification of its optimizers was the observation that it is equivalent to the following sharp inequality on $\Sph$,
\begin{equation}\label{eq:hlssphere}
\iint_{\Sph\times\Sph} \frac{g(\omega)\,g(\eta)}{|\omega-\eta|^\lambda}\diff \omega\diff \eta \leq \mathcal C_{\lambda,n} \left( \int_{\Sph} |g|^{2n/(2n-\lambda)}\diff \omega \right)^{(2n-\lambda)/n}.
\end{equation}
In fact, each side of \eqref{eq:hlssphere} equals the corresponding side in \eqref{eq:hls} if
$$
f(x) = \left( \frac{2}{1+|x|^2} \right)^{(2n-\lambda)/2} g(\mathcal S(x)) \,,
$$
where $\mathcal S:\R^n\to\Sph$ is the inverse stereographic projection; see \eqref{eq:stereo} below. This transformation yields also a characterization of optimizers and of positive solutions of the Euler--Lagrange equation corresponding to \eqref{eq:hlssphere}. The functions $f$ in \eqref{eq:hlsopt} become
\begin{equation}
\label{eq:optsphere}
g(x) = c \left( \frac{\sqrt{1-|\zeta|^2}}{1-\zeta\cdot\omega} \right)^{(2n-\lambda)/2}
\end{equation}
with $\zeta\in\R^{n+1}$ such that $|\zeta|<1$. More explicitly, there is a bijection between such $\zeta$ and parameters $a\in\R^n$, $b>0$ in \eqref{eq:hlsopt} given by $\zeta = (2\eta - b^2(1+\eta_{n+1})e_{n+1})/(2+b^2(1+\eta_{n+1}))$ with $\eta = \mathcal S(a)$.

Beckner \cite[Eq.~(19)]{Be1993} observed that, in the same sense as \eqref{eq:sob} is the dual of \eqref{eq:hls}, the dual of \eqref{eq:hlssphere} is
\begin{eqnarray}
\label{eq:sobsphere}
\left\| A_{2s}^{1/2} v \right\|_2^2 \geq \mathcal S_{s,n} \|v\|_q^2
\end{eqnarray}
with
\begin{align}
\label{eq:opas}
A_{2s} = \frac{\Gamma(B+\tfrac12 + s)}{\Gamma(B+\tfrac12 - s)}
\qquad\text{and}\qquad
B = \sqrt{-\Delta_{\Sph} + \tfrac{(n-1)^2}{4}} \,.
\end{align}
The operators $A_{2s}$ are special cases of the GJMS operators in conformal geometry \cite{GrJeMaSp}. The duality between \eqref{eq:hlssphere} and \eqref{eq:sobsphere} and the known results about the former yield a characterization of optimizers and of positive solutions of the Euler--Lagrange equation corresponding to \eqref{eq:sobsphere}.

The relation between these inequalities and classification results and the problem studied in this paper is as follows. Inequality \eqref{eq:sobsphere} becomes an equality as $s\to 0$. Differentiating at $s=0$, Beckner \cite{Be1992} obtained the inequality
$$
\left( v,\left(\psi(B+\tfrac12)-\psi(\tfrac n2) \right) v \right) \geq \frac1n \int_{\Sph} |v(\omega)|^2 \ln \frac{|v(\omega)|^2 |\Sph|}{\|v\|_2^2} \diff\omega \,,
$$
where $\psi=\Gamma'/\Gamma$ is the digamma function. Using the Funk--Hecke formula one can show that
\begin{equation}
\label{eq:energyharmonics}
\left( v,\left(\psi(B+\tfrac12)-\psi(\tfrac n2) \right) v \right) = \frac{1}{n\,C_n} \iint_{\Sph \times \Sph} \frac{|v(\omega) - v(\eta)|^2}{|\omega - \eta|^n} \, \diff\omega \diff\eta \,,
\end{equation}
which yields \eqref{beckner log sob}. Alternatively, one can subtract
$$
\int_\Sph \frac{d\omega}{|\omega-e|^\lambda} \|g\|_2^2
$$
(with $e\in\Sph$ arbitrary) from the left side of \eqref{eq:hlssphere} and pass to the limit $\lambda\to n$. From the characterization of optimizers in \eqref{eq:hlssphere} or \eqref{eq:sobsphere} (or by a simple computation), one finds that the functions in \eqref{eq:beckneropt} are optimizers in \eqref{beckner log sob}. Because of the limiting argument, however, uniqueness of these optimizers requires a separate argument \cite{Be1997}.

Similarly, characterization of the solutions of the Euler--Lagrange equations corresponding to \eqref{eq:hlssphere} or \eqref{eq:sobsphere} does not yield the characterization of solutions of the limiting equation \eqref{equation 1}. This is what we achieve in the present paper.


\subsection{Notation}

For $u,v\in\mathcal D$, we put
$$
\mathcal E [u,v] := \frac{1}{2}\int_{\Sph}\int_{\Sph}\frac{(u(\xi)-u(\eta))(v(\xi)-v(\eta))}{|\xi-\eta|^n}\diff\xi\diff\eta \,.
$$
Moreover, if $u$ is sufficiently regular (for instance, Dini continuous), then we introduce
\begin{equation}
\label{eq:defop}
H u(\xi) := P.V. \int_\Sph \frac{u(\xi) - u(\eta)}{|\xi - \eta|^n} \diff \eta \,.
\end{equation}
Note that in this case, for any $v\in\mathcal D$,
$$
\int_{\Sph} v(\xi) (Hu)(\xi) \diff\xi = \mathcal E[v,u] \,.
$$


\section{Preliminaries}
\label{section prelim}

In this section we prove conformal invariance of equation \eqref{equation 1}. Moreover, we introduce the necessary notation for the conformal maps which our argument relies on, namely inversion and reflection on $\R^n$ and stereographic projection from $\Sph$ to $\R^n$.


\subsection{Conformal invariance}

For a general conformal map $\Phi: X \to Y$ with determinant $J_\Phi(x) := |\det D \Phi(x)|$ and a function $u \in L^2(Y)$, we define the pullback of $u$ under $\Phi$ by
\begin{equation}
\label{definition u Phi} u_\Phi(x) :=  J_\Phi(x)^{1/2} u(\Phi(x)), \qquad x \in X.
\end{equation}
This definition is chosen so that $\|u_\Phi\|_{L^2(X)} = \|u\|_{L^2(Y)}$.

The following lemma shows that equation \eqref{equation 1} is conformally invariant. This is crucial for our approach.

\begin{lemma}
	\label{lemma conf inv}
	Let $u, v \in \mathcal D$ and let $\Phi$ be a conformal map on $\Sph$. Then $u_\Phi, v_\Phi \in \mathcal D$ and we have
	\begin{equation}
	\label{conf transf E} \mathcal E[u_\Phi, v_\Phi] = \mathcal E[u, v] + C_n \int_\Sph u v \ln J_{\Phi^{-1}}^{-1/2} \diff \xi
	\end{equation}
	and, in particular, in the weak sense,
	\begin{equation}
	\label{conf transf H}
	H (u_\Phi)
	= (H u)_\Phi
	+ C_n u_\Phi \ln J_\Phi^\frac 12 \,.
	\end{equation}
	Moreover, if $u$ is a weak solution to \eqref{equation 1}, then so is $u_\Phi$.	
\end{lemma}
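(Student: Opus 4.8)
The plan is to establish the key identity \eqref{conf transf E} by a direct change of variables in the double integral defining $\mathcal E$, and then to deduce \eqref{conf transf H} and the invariance of weak solutions as formal consequences. The starting point is the conformal covariance of the kernel $|\xi-\eta|^{-n}$ on $\Sph$: if $\Phi$ is a conformal map of $\Sph$, then there is a classical identity expressing $|\Phi(x)-\Phi(y)|$ in terms of $|x-y|$ and the conformal factors $J_\Phi(x)$, $J_\Phi(y)$. Concretely, one has $|\Phi(x)-\Phi(y)|^2 = J_\Phi(x)^{1/n} J_\Phi(y)^{1/n} |x-y|^2$, so that $|\Phi(x)-\Phi(y)|^{-n} = J_\Phi(x)^{-1/2} J_\Phi(y)^{-1/2} |x-y|^{-n}$. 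I would first record this (it follows, e.g., by writing $\Phi$ as a composition of rotations and a conformal transformation realized via stereographic projection, where the Euclidean version $|x-y| \mapsto |x-y|/(|x-a|\,|y-a|)$ type formulas are elementary). Plugging $u_\Phi(x) = J_\Phi(x)^{1/2}u(\Phi(x))$ into $\mathcal E[u_\Phi,v_\Phi]$ and expanding the product $(u_\Phi(x)-u_\Phi(y))(v_\Phi(x)-v_\Phi(y))$ produces, after using the kernel identity, a term that is exactly $\mathcal E[u,v]$ (via $\xi=\Phi(x)$, $\eta=\Phi(y)$, absorbing the Jacobians) plus cross terms of the form $\iint (J_\Phi(x)^{1/2}-J_\Phi(y)^{1/2}) \cdots$; the cross terms should collapse, again after the substitution $\xi = \Phi(x)$, to a single integral $\int_\Sph u(\xi)v(\xi) \, w(\xi)\,\diff\xi$ for an explicit weight $w$. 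Identifying $w$ with $C_n \ln J_{\Phi^{-1}}^{-1/2}$ will require the computation $P.V.\int_\Sph (1 - J_\Phi(x)^{-1/2}J_\Phi(y)^{-1/2}\,\text{(stuff)})|x-y|^{-n}\,\diff y = C_n \ln J_\Phi(x)^{1/2}$ or its equivalent; the value $C_n$ of the constant is pinned down by testing against the known solution $u\equiv 1$ and its conformal orbit \eqref{eq:beckneropt}, or by a direct residue-type evaluation of the principal value integral of $(1-|x-y|^{-n}\cdot\text{smooth})$.

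Once \eqref{conf transf E} is in hand, the passage to \eqref{conf transf H} is purely formal: if $u$ is regular enough that $Hu$ makes sense pointwise, then $\int_\Sph v\,(Hu_\Phi)\,\diff\xi = \mathcal E[v, u_\Phi]$; write $v = (\tilde v)_\Phi$, apply \eqref{conf transf E} (in its bilinear, not necessarily symmetric, form, which one gets by polarization or by running the computation with $u_\Phi, v_\Phi$ directly), change variables back, and read off $H(u_\Phi) = (Hu)_\Phi + C_n u_\Phi \ln J_\Phi^{1/2}$ by testing against arbitrary $v$. The invariance of weak solutions is then immediate: if $u$ solves \eqref{equation 1} weakly, i.e.\ $\mathcal E[\varphi,u] = C_n\int_\Sph \varphi\, u\ln u\,\diff\xi$ for all $\varphi\in\mathcal D$, then applying \eqref{conf transf E} to $\mathcal E[\varphi_\Phi, u_\Phi]$ and using $\ln u_\Phi = \ln J_\Phi^{1/2} + (\ln u)\circ\Phi$ together with the substitution $\xi = \Phi(x)$ one checks that the extra logarithmic term produced by \eqref{conf transf E} combines with the $\ln J_\Phi^{1/2}$ piece of $\ln u_\Phi$ to reconstitute exactly $C_n\int_\Sph \varphi_\Phi\, u_\Phi \ln u_\Phi\,\diff\xi$; since $\Phi^{-1}$ is also conformal and $(\cdot)_\Phi$ maps $\mathcal D$ onto $\mathcal D$, the test functions $\varphi_\Phi$ range over all of $\mathcal D$, so $u_\Phi$ is a weak solution.

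Two technical points need care. First, the fact that $u_\Phi\in\mathcal D$ whenever $u\in\mathcal D$: this follows from \eqref{conf transf E} applied with $v = u$ once one knows the correction integral $\int_\Sph u^2 \ln J_{\Phi^{-1}}^{-1/2}\,\diff\xi$ is finite, which holds because $\ln J_{\Phi^{-1}}$ is bounded on $\Sph$ (the conformal factor of a fixed Möbius transformation of the sphere is bounded above and below) and $u\in L^2(\Sph)$. Second, the manipulations with the principal value integral must be justified: I would either restrict the main computation to a dense class of smooth (say Dini continuous) $u$, where $Hu$ is defined pointwise and Fubini applies without difficulty, and then extend \eqref{conf transf E} to all of $\mathcal D$ by density in the energy norm (both sides being continuous in that norm, using the $L^2$-boundedness of the correction term), or handle the principal value symmetrically by writing the $\epsilon$-truncated integral and passing to the limit.

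I expect the main obstacle to be the explicit evaluation of the logarithmic correction term — that is, showing the cross terms in the expansion of $\mathcal E[u_\Phi, v_\Phi]$ assemble precisely into $C_n \int_\Sph uv \ln J_{\Phi^{-1}}^{-1/2}\,\diff\xi$ with the correct constant $C_n$ from \eqref{eq:becknerconst}. This is where the special value of the kernel exponent $n$ (making $|\cdot|^{-n}$ the "logarithmic" borderline on $\Sph$) enters decisively, and it is essentially the computation underlying the appearance of the entropy term on the right-hand side of Beckner's inequality \eqref{beckner log sob}; the cleanest route is probably to differentiate the analogous (and easier) conformal covariance identity for the kernel $|\xi-\eta|^{-\lambda}$ with respect to $\lambda$ at $\lambda = n$, mirroring the limiting procedure by which \eqref{beckner log sob} itself arises from \eqref{eq:hlssphere}.
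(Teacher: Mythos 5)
Your proposal is correct, and the ``cleanest route'' you single out at the end---differentiating the conformal covariance of the kernel $|\xi-\eta|^{-\lambda}$ at $\lambda=n$---is precisely the paper's method: they introduce the normalized family $\mathcal P_{2s}$ with kernel $\propto |\xi-\eta|^{-(n-2s)}$, record its exact conformal covariance \eqref{property of P}, and obtain \eqref{conf transf H} by differentiating at $s=0$, where the logarithmic factor appears automatically from $\tfrac{d}{ds}J_\Phi^{(n+2s)/(2n)}\big|_{s=0}=\tfrac1n J_\Phi^{1/2}\ln J_\Phi$ and the constant $C_n$ is pinned down by the Funk--Hecke eigenvalues of $\mathcal P_{2s}$. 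Your primary route---expanding $\mathcal E[u_\Phi,v_\Phi]$ directly after the substitution $\xi=\Phi(x)$, $\eta=\Phi(y)$---works in outline, but the diagonal terms you group into the ``cross-term'' weight each diverge individually and, after regularization, reduce to the explicit evaluation
\[
P.V.\int_\Sph \frac{J_{\Phi^{-1}}(\xi)^{-1/2}J_{\Phi^{-1}}(\eta)^{1/2}-1}{|\xi-\eta|^{n}}\,\diff\eta
= C_n\,\ln J_{\Phi^{-1}}(\xi)^{-1/2},
\]
which is equivalent to verifying that $J_{\Phi^{-1}}^{1/2}=\1_{\Phi^{-1}}$ solves \eqref{equation 1}; the differentiation trick sidesteps this computation entirely, which is the gain. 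Your remaining points---restricting first to a regular dense class and extending by density (the paper uses finite spherical-harmonic sums $\mathcal F$, you suggest Dini continuous functions), bounding the correction term via the boundedness of $\ln J_{\Phi^{-1}}$ for a fixed M\"obius map, and deducing the invariance of weak solutions by testing against $\varphi_\Phi$ and changing variables---match the paper's Steps~3 and~4.
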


To avoid confusion, we emphasize that in the second term on the right side of \eqref{conf transf E}, $\Phi^{-1}$ denotes the inverse of the map $\Phi$, while $J_{\Phi^{-1}}^{-1/2}$ denotes $1/\sqrt{J_{\Phi^{-1}}}$.

\begin{proof}
	\textit{Step 1.}
	For $s > 0$, denote by $\mathcal P_{2s}$ the operator given by
	\begin{align}\label{definition of P}
	\mathcal{P}_{2s}u (\xi) := \frac{\Gamma(\frac{n-2s}{2})}{2^{2s}\pi^{\frac{n}{2}}\Gamma(s)}\int_\Sph\frac{ u(\eta)}{|\xi - \eta|^{n-2s}} \diff \eta.
	\end{align}
	This operator fulfills the conformal invariance property
	\begin{align}\label{property of P}
	\mathcal{P}_{2s} \left(J_\Phi^{\frac{n+2s}{2n}}u\circ{\Phi}\right)=J_\Phi^{\frac{n-2s}{2n}}(\mathcal{P}_{2s}u)\circ{\Phi}.
	\end{align}
	Indeed, this follows from a straightforward change of variables together with the transformation rule
	\begin{equation}
	\label{eq:confdistance}
	J_\Phi(\xi)^{\frac{1}{n}}|\xi-\eta|^2J_\Phi(\eta)^{\frac{1}{n}}=|\Phi(\xi)-\Phi(\eta)|^2,
	\end{equation}
	which holds because $\Phi$ is conformal. 
	
	We next give the action of $\mathcal P_{2s}$ on spherical harmonics. We denote by $(Y_{l,m})$ an orthonormal basis of $L^2(\Sph)$ composed of real spherical harmonics. The index $l$ runs through $\N_0$ and denotes the degree of the spherical harmonic. The index $m$ runs through a certain index set of cardinality depending on $l$ and labels the degeneracy of spherical harmonics of degree $l$.	
	
	By the Funk-Hecke formula (see \cite[Eq.~(17)]{Be1993} and also \cite[Corollary 4.3]{FrLi2012}) we have
	$$
	\mathcal{P}_{2s}Y_{l,m}=\frac{\Gamma(l+n/2-s)}{\Gamma(l+n/2+s)}Y_{l,m}.
	$$
	Expanding $u \in L^2(\Sph)$ in terms of the spherical harmonics,
	\begin{equation}
	\label{sph harm exp}
	u=\sum_{l,m} u_{l,m}Y_{l,m} \qquad \qquad \textrm{with} \qquad u_{l,m}=\int_{\mathbb{S}^n}uY_{l,m} \diff \eta,
	\end{equation}
	by the Funk-Hecke formula (see \cite[Corollary 4.3]{FrLi2012}) we have the representation
	$$
	\mathcal{P}_{2s} u=\sum_{l,m} u_{l,m} \mathcal{P}_{2s}Y_{l,m} =\sum_{l,m} u_{l,m}\frac{\Gamma(l+n/2-s)}{\Gamma(l+n/2+s)}Y_{l,m} \,.
	$$
	In passing, we note that the right side is equal to $A_{2s}^{-1} u$ with the operator $A_{2s}$ from \eqref{eq:opas}.
	
	We denote by $\mathcal F$ the space of functions $u$ on $\Sph$ such that only finitely many coefficients $u_{l,m}$ in \eqref{sph harm exp} are nonzero. All the above computations are justified, in particular, for such functions. Moreover, acting on such functions one has $\lim_{s\to 0} \mathcal P_{2s} = 1 =: \mathcal P_0$.
	
	\textit{Step 2.}
	We now prove \eqref{conf transf H} for $u \in \mathcal F$. For such $u$, we may differentiate the identity \eqref{property of P}
	with respect to $s$ at $s=0$. We note that
	$$
	\mathcal P_0 = 1 \,,
	\qquad
	\dot{\mathcal P}_0 = - \frac{\Gamma(\frac n2)}{2\pi^{\frac n2}} H - \frac{\dot\Gamma(\frac n2)}{\Gamma(\frac n2)} \,.
	$$
	(Here and in the following, we use a dot to denote the derivative with respect to $s$.)	The latter follows from the identity
	\[ H u (\xi)  = \lim_{s \to 0} \int_\Sph \frac{u(\xi) - u(\eta)}{|\xi - \eta|^{n-2s}} \diff \eta  = \frac{2\pi^{\frac n2}}{\Gamma(\frac n2)} \lim_{s\to 0} \frac{1}{2s} \left( \frac{\Gamma(\frac n2 - s)}{\Gamma(\frac n2 +s)} - \mathcal P_{2s}\right) u \]
	by expanding the quotient of gamma functions.
	
	For the left side of \eqref{property of P}, we obtain
	$$
	\frac{d}{ds} \mathcal P_{2s} \left( J_\Phi^\frac{n+2s}{2n} u \circ\Phi \right) = 2\, \dot {\mathcal P}_{2s} \left( J_\Phi^\frac{n+2s}{2n} u \circ\Phi \right) + \frac 1n \mathcal P_{2s} \left( J_\Phi^\frac{n+2s}{2n} \ln J_\Phi \ u \circ\Phi \right)
	$$
	and therefore, at $s=0$,
	$$
	\frac{d}{ds}|_{s=0} \mathcal P_{2s} \left( J_\Phi^\frac{n+2s}{2n} u \circ\Phi \right) = - 2\, \left( \frac{\Gamma(\frac n2)}{2\pi^{\frac n2}} H + \frac{\dot\Gamma(\frac n2)}{\Gamma(\frac n2)} \right) \left( J_\Phi^\frac{1}{2} u \circ\Phi \right) + \frac 1n \left( J_\Phi^\frac{1}{2} \ln J_\Phi \ u \circ\Phi \right).
	$$
	For the right side of \eqref{property of P}, we obtain
	$$
	\frac{d}{ds} J_\Phi^{\frac{n-2s}{2n}} \left( \mathcal P_{2s}u\right)\circ\Phi = -\frac 1n J_\Phi^{\frac{n-2s}{2n}} \ln J_\Phi \left( \mathcal P_{2s}u\right)\circ\Phi
	+ 2\, J_\Phi^{\frac{n-2s}{2n}} \left( \dot{\mathcal P}_{2s}u\right)\circ\Phi
	$$
	and therefore, at $s=0$,
	$$
	\frac{d}{ds}|_{s=0} J_\Phi^{\frac{n-2s}{2n}} \left( \mathcal P_{2s}u\right)\circ\Phi = -\frac 1n J_\Phi^{\frac{1}{2}} \ln J_\Phi \ u \circ\Phi
	- 2\, J_\Phi^{\frac{1}{2}} \left( \left(\frac{\Gamma(\frac n2)}{2\pi^{\frac n2}} H + \frac{\dot\Gamma(\frac n2)}{\Gamma(\frac n2)} \right) u\right)\circ\Phi \,.
	$$
	Combining these two identities and recalling $u_\Phi = J_\Phi^{1/2} u \circ \Phi$, we arrive at equation \eqref{conf transf H}, understood in a pointwise sense.
	
	Now let $v\in\mathcal F$, multiply \eqref{conf transf H} by $v_\Phi$ and integrate over $\Sph$. After a change of variables $\xi \mapsto \Phi(\xi)$ on the right hand side, we obtain the desired identity \eqref{conf transf E} for all $u, v \in \mathcal F$.
	
	\textit{Step 3.}
	We now remove the apriori assumption $u, v \in \mathcal F$. 
	
	From the representation \eqref{eq:energyharmonics} we see that $\mathcal F \subset \mathcal D$ is dense in $\mathcal D$ with respect to the norm $\sqrt{\mathcal E[u,u] + \|u\|_2^2}$.
	
	We need to show that the second term on the right side of \eqref{conf transf E} is harmless. By the classification of conformal maps of $\Sph$, we know that $J_{\Phi^{-1}}(\xi) = (\sqrt{1-|\zeta|^2}/(1-\zeta\cdot\xi))^n$ for some $\zeta\in\R^{n+1}$ with $|\zeta|<1$. Thus,
	$$
	J_{\Phi^{-1}}(\xi) \geq \left( \frac{\sqrt{1-|\zeta|^2}}{1+|\zeta|} \right)^n = \left( \frac{1-|\zeta|}{1+|\zeta|}\right)^{n/2} \,.
	$$
	This implies
	\begin{align*}
	C_n \int_\Sph u^2 \ln J_{\Phi^{-1}}^{-1/2} \diff \xi &\leq \frac{n\,C_n}4 \left( \ln \frac{1+|\zeta|}{1-|\zeta|}\right) \|u\|_2^2 = C_\Phi\, \|u\|_2^2
	\end{align*}
	and
	\begin{equation*}
	\mathcal E[u_\Phi, u_\Phi] \leq \mathcal E[u, u] + C_\Phi\, \|u\|_2^2 \,.  
	\end{equation*}
	
	This bound, together with a standard approximation argument, shows that $u_\Phi \in \mathcal D$ whenever $u \in \mathcal D$ and that \eqref{conf transf E} holds for every $u,v \in \mathcal D$.
	
	\emph{Step 4.}	
	To obtain the statement on conformal invariance of equation \eqref{equation 1}, let $\varphi \in \mathcal D$ and set $v:=\varphi_{\Phi^{-1}}$. Then we compute, using \eqref{conf transf E},
	\begin{align*}
	\mathcal{E}[\varphi, u_\Phi] & =\mathcal{E}[v_\Phi,u_\Phi] = \mathcal{E}[v,u]+ C_n \int_{\mathbb{S}^n} v u \ln J_{\Phi^{-1}}^{-1/2} \diff \xi\\
	& =C_n \int_{\mathbb{S}^n} vu\ln u \diff \xi+C_n \int_{\mathbb{S}^n} vu \ln J_{\Phi^{-1}}^{-1/2}  \diff \xi
	=C_n \int_{\mathbb{S}^n} \varphi u_\Phi \ln u_\Phi \diff \xi.
	\end{align*}
	This finishes the proof. 
\end{proof}


\subsection{Some conformal maps}\label{subsection stereographic}

Our argument will make use of certain one-parameter families of conformal transformations of $\Sph$. It is natural to define these maps on $\R^n$ and then to lift them to the sphere via a stereographic projection. The families in questions are inversions in spheres (with fixed center and varying radii) and reflections in hyperplanes (with fixed normal and varying positions).

Let us set up our notation. On $\R^n$, the inversion about the sphere $\partial B_\lambda(x_0)$ with center $x_0 \in \R^n$ and radius $\lambda >0$ is given by
$$
I_\lx : \R^n \setminus \{x_0\} \to \R^n \setminus \{x_0\} \,,
\qquad
I_{\lambda, x_0}(x) = \frac{\lambda^2 (x-x_0)}{|x-x_0|^2} + x_0 \,.
$$
Similarly, if $H_{\alpha,e}:= \{ x \in \R^n \, : \, x \cdot e > \alpha \}$ denotes the halfspace with normal $e \in \mathbb S^{n-1}$ and position $\alpha \in \R$, the reflection about the hyperplane $\partial H_{\alpha,e}$ is given by
$$
R_{\alpha,e} : \R^n \to \R^n \,,
\qquad
R_{\alpha,e}(x) := x + 2 (\alpha - x \cdot e)e \,.
$$

The inverse stereographic projection $\Ste:  \R^n \to \Sph \setminus \{S\}$, where $S = - e_{n+1}$ denotes the southpole, is given by
\begin{equation}
\label{eq:stereo}
(\Ste(x))_i= \frac{2 x_i}{1 + |x|^2}, \quad i = 1,...,n, \quad (\Ste(x))_{n+1} = \frac{1-|x|^2}{1+|x|^2}.
\end{equation}
Correspondingly, the stereographic projection is given by $\Ste^{-1}: \Sph \setminus \{S\} \to \R^n$,
\[ (\Ste^{-1}(\xi))_i = \frac{\xi_i}{1 + \xi_{n+1}}, \quad i = 1,...,n. \]

Using stereographic projection, we now lift the inversions and reflections to $\Sph$. For any $\lambda>0$ and $\xi_0 \in\Sph\setminus\{S\}$ we set
\begin{equation}
\label{definition Phi lambda xi}
\Phi_\lxi := \Ste \circ I_\lx \circ \Ste^{-1} : \Sph \setminus \{\xi_0, S \} \to \Sph \setminus \{\xi_0,S\}.
\end{equation}
Here and in the following, the relation $\Ste (x_0) = \xi_0$ is understood. The map $\Phi_\lxi$ is conformal, being a composition of conformal maps.
We abbreviate
$$
J_\lxi(\eta) := |\det D \Phi_\lxi (\eta)|
\qquad\text{and}\qquad
\Slxi := \Ste(B_\lambda(x_0)) \,.
$$
Similarly, for any $\alpha\in\R$ and $e\in\mathbb S^{n-1}$ we set
\begin{equation}
\label{definition Psi a e}
\Psi_{\alpha,e} := \Ste \circ R_{\alpha,e} \circ \Ste^{-1} : \, \Sph \setminus \{S\} \to \Sph \setminus \{S\}.
\end{equation}
We abbreviate
$$
J_{\alpha,e}(\eta):= |\det D \Psi_{\alpha,e} (\eta)| \,.
$$

To motivate the following lemma, we recall that when applying the method of moving planes in $\R^n$ in integral form, the following inequality is fundamental,
\begin{equation}
\label{difference is negative on Rn}
|x - y| < |x- R_{0,e}(y)| \qquad \text{ for all } x, y \in H_{0,e} \,.
\end{equation}
The following lemma yields the corresponding inequalities for the lifted maps $\Phi_\lxi$ and $\Psi_{\alpha,e}$ on $\Sph$.

\begin{lemma}
\label{lemma difference is negative}
\begin{enumerate}
\item[(i)]
Let $\lambda > 0$ and let $\xi_0 \in \Sph \setminus \{S\}$. Then
\begin{equation}
\label{difference is negative on Sph}
\frac{|J_\lxi(\eta)|^{1/2}}{|\xi - \Phi_\lxi(\eta)|^n} - \frac{1}{|\xi - \eta|^n} < 0
\qquad\text{for all $\xi, \eta \in \Slxi$ with $\xi \neq \eta$}.
\end{equation}

\item[(ii)]
Let $\alpha \in \R$ and let $e \in \mathbb S^{n-1}$. Then
\[ \frac{|J_{\alpha,e}(\eta)|^{1/2}}{|\xi - \Psi_{\alpha,e}(\eta)|^n} - \frac{1}{|\xi - \eta|^n} < 0 
\qquad\text{for all $\xi, \eta \in \Ste(H_{\alpha,e})$ with $\xi \neq \eta$}.
\]
\end{enumerate}
\end{lemma}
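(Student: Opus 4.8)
The plan is to reduce everything to the Euclidean inequality \eqref{difference is negative on Rn} by transporting it through the stereographic projection, using the conformal distance formula \eqref{eq:confdistance} as the bridge. I would treat part (ii) first, since the reflections $R_{\alpha,e}$ are isometries of $\R^n$ (so $J_{R_{\alpha,e}}\equiv 1$), which makes the bookkeeping lighter, and then handle the inversions in part (i) by the same scheme with an extra Jacobian factor to track. For part (ii), apply \eqref{eq:confdistance} to the conformal map $\Ste^{-1}$ (strictly, to $\Ste$, or to $\Ste^{-1}$ on the appropriate domains) to rewrite $|\xi-\eta|^2$ and $|\xi-\Psi_{\alpha,e}(\eta)|^2$ in terms of the Euclidean quantities $|\Ste^{-1}(\xi)-\Ste^{-1}(\eta)|^2$ and $|\Ste^{-1}(\xi)-R_{\alpha,e}(\Ste^{-1}(\eta))|^2$, noting that $\Ste^{-1}\circ\Psi_{\alpha,e}=R_{\alpha,e}\circ\Ste^{-1}$. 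Writing $x:=\Ste^{-1}(\xi)$, $y:=\Ste^{-1}(\eta)$, both in $H_{\alpha,e}$ by the hypothesis $\xi,\eta\in\Ste(H_{\alpha,e})$, the claim should collapse, after cancelling the common Jacobian factors of $\Ste^{-1}$ at $\xi$, to exactly $|x-y|<|x-R_{\alpha,e}(y)|$, which is \eqref{difference is negative on Rn} (after translating $H_{\alpha,e}$ to $H_{0,e}$). The one point to verify carefully is that the chain-rule relation $J_{\alpha,e}(\eta)=J_{\Ste}(R_{\alpha,e}(x))\,J_{R_{\alpha,e}}(x)\,J_{\Ste^{-1}}(\xi)$ combines with the two applications of \eqref{eq:confdistance} so that every factor except the Euclidean distance ratio drops out.

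For part (i) the structure is identical, now with $\Phi_\lxi=\Ste\circ I_\lx\circ\Ste^{-1}$ and the inversion $I_{\lambda,x_0}$ satisfying $|I_{\lambda,x_0}(x)-I_{\lambda,x_0}(y)| = \lambda^2|x-y|/(|x-x_0|\,|y-x_0|)$, equivalently $J_{I_{\lambda,x_0}}(x)=(\lambda/|x-x_0|)^{2n}$ together with the analogue of \eqref{eq:confdistance} for $I_{\lambda,x_0}$. Setting again $x=\Ste^{-1}(\xi)$, $y=\Ste^{-1}(\eta)$ — now both in $B_\lambda(x_0)$ since $\xi,\eta\in\Slxi=\Ste(B_\lambda(x_0))$ — and pushing the two identities through, the desired inequality should reduce to the Euclidean inversion inequality
\[
\frac{|J_{I_{\lambda,x_0}}(y)|^{1/2}}{|x - I_{\lambda,x_0}(y)|^n} - \frac{1}{|x-y|^n} < 0
\qquad\text{for all }x,y\in B_\lambda(x_0),\ x\neq y,
\]
which is the standard fact underlying the integral moving-sphere method; it follows from $|x-I_{\lambda,x_0}(y)|^2 = (|x-x_0|/\lambda)^2\,\bigl||I_{\lambda,x_0}(x)-y|\bigr|^2$-type manipulations, or directly from the computation $|x-I_{\lambda,x_0}(y)|^2 - (\lambda/|y-x_0|)^2|x-y|^2 = (1-|x-x_0|^2/\lambda^2)(|y-x_0|^2 - \lambda^2)\cdot(\text{positive})$, whose sign is strictly positive exactly when both $|x-x_0|<\lambda$ and $|y-x_0|<\lambda$.

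The main obstacle, and the only place where care is genuinely needed, is the Jacobian bookkeeping: one must confirm that the factor $|J_\lxi(\eta)|^{1/2}$ appearing in \eqref{difference is negative on Sph} is precisely the one produced by the chain rule $J_\lxi = (J_{\Ste}\circ I_\lx\circ\Ste^{-1})\cdot(J_{I_\lx}\circ\Ste^{-1})\cdot J_{\Ste^{-1}}$, and that when this is inserted and \eqref{eq:confdistance} is applied to the two conformal maps $\Ste$ (at the points $I_\lx(\Ste^{-1}(\xi))$-adjacent pairs) and $\Ste^{-1}$, all spherical-to-Euclidean conversion factors cancel in pairs, leaving a clean Euclidean inequality with the correct remaining Jacobian $|J_{I_\lx}(y)|^{1/2}$. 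Once that algebra is checked, both parts follow immediately; I would present part (ii) in full and remark that part (i) is entirely analogous with $J_{R_{\alpha,e}}\equiv1$ replaced by the explicit inversion Jacobian.
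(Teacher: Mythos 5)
Your plan is sound and does prove the lemma, but for part (i) it diverges from the paper's route in an instructive way. The paper does not reduce \eqref{difference is negative on Sph} to the Euclidean \emph{ball-inversion} inequality. Instead it introduces an auxiliary conformal map $\mathcal B_\lx$ of $\R^n$ which conjugates $I_\lx$ to the single fixed reflection $R_{0,e_n}$ (so $I_\lx=\mathcal B_\lx^{-1}\circ R_{0,e_n}\circ\mathcal B_\lx$), sets $\mathcal T=\mathcal B_\lx\circ\Ste^{-1}$, and then pushes everything through $\mathcal T$ using \eqref{eq:confdistance} and the chain rule. The payoff is that both (i) and (ii) land on the \emph{same} elementary Euclidean fact \eqref{difference is negative on Rn}, with no additional inequality to verify. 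Your version transfers only through $\Ste^{-1}$ and arrives at the ball-inversion inequality
\[
J_{I_{\lambda,x_0}}(y)^{1/2}\,|x-y|^n \;<\; |x-I_{\lambda,x_0}(y)|^n
\qquad\text{for } x,y\in B_\lambda(x_0),\ x\neq y,
\]
which you then must prove separately. That is a perfectly legitimate shortcut (and arguably more self-contained, since it avoids constructing $\mathcal B_\lx$), at the cost of one extra computation; the Jacobian bookkeeping you worry about does indeed close up cleanly, exactly as in your part (ii) reduction, because the two factors $J_\Ste(x)^{1/2}J_\Ste(I_\lx(y))^{1/2}$ coming from \eqref{eq:confdistance} cancel on both sides.

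One small slip in your sketch of the direct Euclidean computation: after reducing to $x_0=0$, $\lambda=1$, the correct identity is
\[
|x-I(y)|^2-\frac{|x-y|^2}{|y|^2}
=\frac{(|x|^2-1)(|y|^2-1)}{|y|^2},
\]
and more generally $|x-I_{\lambda,x_0}(y)|^2-(\lambda/|y-x_0|)^2|x-y|^2=(|x-x_0|^2-\lambda^2)(|y-x_0|^2-\lambda^2)/|y-x_0|^2$, which is positive precisely when $x,y\in B_\lambda(x_0)$ because \emph{both} factors in the numerator are then negative. Your displayed formula $(1-|x-x_0|^2/\lambda^2)(|y-x_0|^2-\lambda^2)\cdot(\text{positive})$ has the opposite sign on the first factor, so as written it would be negative on the ball; the conclusion you state is right, but the expression should be fixed before presenting the argument.
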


\begin{proof}
Inequality \eqref{difference is negative on Sph} is equivalent to the inequality
\begin{equation}
\label{difference on Sph ver 1}
 |\xi - \eta| < |\xi - \Phi_\lxi (\eta)| |J_\lxi(\eta)|^{-1/2n} \qquad \text{ for all } \xi, \eta \in \Sigma_\lxi.
\end{equation}

Our strategy is to deduce this inequality from \eqref{difference is negative on Rn} (with $e=e_n$, say). We observe that there is a conformal map $\mathcal B_\lx : \R^n \setminus \{x_0, x_0 - \lambda e_n\} \to \R^n \setminus \{x_0, x_0 - \lambda e_n\}$, which maps the ball $B_\lambda(x_0)$ to the half-space $H_{0,e_n}$ and which is such that $I_\lx = \mathcal B_\lx^{-1} \circ R_{0,e_n} \circ \mathcal B_\lx$. (See \cite[Section 2.2]{FrLi} for details; the map $\mathcal B_\lx$ is the map $\mathcal B$ given there, composed with a dilation and a translation which map $B_\lambda(x_0)$ to $B_1(0)$.)

Therefore, letting $\mathcal T = \mathcal T_\lx = \mathcal B_\lx \circ \mathcal S^{-1}$, we can write $\Phi_\lxi = \mathcal T^{-1} \circ R_{0,e_n} \circ \mathcal T$. If we use the formula $|\mathcal T(\xi) - \mathcal T(\eta)| = |\det D \mathcal T(\xi)|^{1/2n} |\det D \mathcal T(\eta)|^{1/2n} |\xi - \eta|$ (valid for any conformal map, see \eqref{eq:confdistance}) and the chain rule for the determinantal factors, we get that \eqref{difference on Sph ver 1} is equivalent to
\[ |\mathcal T(\xi) - \mathcal T(\eta)| < |\mathcal T(\xi) - \mathcal T( \Phi_\lx (\eta))| |\det D(\mathcal T \circ \Phi_\lx \circ \mathcal T^{-1})(\mathcal T(\eta))|^{\frac{1}{2n}} \ \ \text{for all } \xi, \eta \in \Sigma_\lxi. \]
Setting $x = \mathcal T (\xi)$ and $y = \mathcal T (\eta)$ and observing that $\mathcal T(\Sigma_\lxi) = H_{0,e_n}$, this simplifies to
\[ |x - y| < |x- R_{0,e_n}(y)| |\det D R_{0,e_n}(y)|^{1/2n} \qquad \text{ for all } x, y \in H_{0,e_n},  \]
which is just \eqref{difference is negative on Rn} because $|\det D R_{0,e_n}(y)| =1$ for all $y \in \R^n$.

The proof of (ii) is similar, but simpler (instead of $\mathcal T$ one can take simply $\Ste^{-1}$) and we omit it.
\end{proof}


\section{Maximum principles for antisymmetric functions}
\label{section max principles}

This section serves as a preparation for the moving spheres argument carried out in Section \ref{section moving spheres} below. Here we will derive two maximum principles which will be the technical heart of the moving spheres method. An important point, which makes them well suited for the moving spheres application, is that both Lemmas \ref{lemma max principle narrow region} and \ref{lemma strong max principle} only hold for antisymmetric functions. Here, with the notation introduced in Section \ref{section prelim}, we call a function $w$ on $\Sph$ \emph{antisymmetric} (with respect to the conformal maps $\Phi_\lxi$ resp. $\Psi_{\alpha,e}$) if
\[ w(\eta) = -w_{\Phi_\lxi}(\eta) \qquad \text{ for a.e. } \quad  \eta \in \Slxi, \]
respectively,
\[ w(\eta) = -w_{\Psi_{\alpha,e}}(\eta) \qquad \text{ for a.e. } \quad  \eta \in \Ste(H_{\alpha,e}). \]
The use of maximum principles is fundamental in the method of moving planes and the role of antisymmetry in these maximum principles becomes particularly important when applied to nonlocal equations. Antisymmetric maximum principles are implicit, among others, in \cite{ChLiOu,Li04,MaZh,FrLe,JiXi} and were made explicit in \cite{JaWe,ChLiLi,ChLiZh}. A particular feature of our result is that we deal with weak solutions without further regularity assumptions and with very weak conditions on the potential. This makes, for instance, the proof of the strong maximum principle from Lemma \ref{lemma strong max principle} below considerably more involved than its counterpart in \cite{ChLiLi}.

For clarity of exposition, we will state and prove the lemmas in this section only for functions antisymmetric with respect to $\Phi_\lxi$. We leave it to the reader to check that their statements and proofs remain valid when $\Phi_\lxi$ and $\Slxi$ are replaced by $\Psi_{\alpha,e}$ and $\Ste(H_{\alpha,e})$.

The first lemma states a maximum principle which is valid for sets of sufficiently small volume. We recall that the constant $C_n$ was defined in \eqref{eq:becknerconst}.

\begin{lemma}[Small volume maximum principle]
\label{lemma max principle narrow region}

Let $\lambda >0$ and $\xi_0 \in \Sph \setminus \{S\}$, let $\Omega \subset \Slxi$ be measurable and let $V: \Omega \to \R$ be a measurable function with
$$
\int_\Omega e^{2 V_-/C_n} < |\Sph| \,.
$$
If $w \in \mathcal D$ is antisymmetric with respect to $\Slxi$ and satisfies
\begin{equation}
\label{assumptions on w narrow region mp 1} \mathcal E[\varphi,w] + \int_\Omega \varphi V w \geq 0 \,\, \text{ for any } 0\leq\varphi\in{\mathcal{D}} \text{ with } \varphi=0 \text{ on } \Omega^c
\end{equation}
and
\begin{equation}
\label{assumptions on w narrow region mp 2}
w \geq 0 \,\, \text{ a.e.~on } \Slxi \setminus \Omega,
\end{equation}
then $w \geq 0$ a.e.~on $\Omega$.
\end{lemma}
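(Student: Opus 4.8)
To prove this small volume maximum principle, I would argue by contradiction and test the weak inequality against the negative part of $w$, exploiting the antisymmetry to control the nonlocal energy from below. Suppose $w_- := \max\{-w, 0\}$ is not identically zero on $\Omega$. By \eqref{assumptions on w narrow region mp 2} we have $\supp w_- \subset \overline\Omega$ up to null sets, so $\varphi := w_- \in \mathcal D$ is a legitimate test function in \eqref{assumptions on w narrow region mp 1} (it is nonnegative and vanishes on $\Omega^c$). This yields
\[
\mathcal E[w_-, w] + \int_\Omega V\, w_-\, w \geq 0 \,.
\]
Since $w_-\, w = -w_-^2$ pointwise, the potential term equals $-\int_\Omega V\, w_-^2 \le \int_\Omega V_-\, w_-^2$. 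So the crux is a lower bound on $\mathcal E[w_-, w]$ that beats $\int_\Omega V_-\, w_-^2$.

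**The key nonlocal estimate.** Write $w = w_+ - w_-$ and split
\[
\mathcal E[w_-, w] = \mathcal E[w_-, w_+] - \mathcal E[w_-, w_-] \,.
\]
The first term is $\mathcal E[w_-, w_+] = -\tfrac12 \iint \frac{(w_-(\xi)-w_-(\eta))(w_+(\xi)-w_+(\eta))}{|\xi-\eta|^n} \ge 0$ because $w_+$ and $w_-$ have disjoint supports, so $(w_-(\xi)-w_-(\eta))(w_+(\xi)-w_+(\eta)) \le 0$. Hence $\mathcal E[w_-, w] \le -\mathcal E[w_-, w_-]$, wait — that goes the wrong direction; what we actually get combined with the tested inequality is
\[
\mathcal E[w_-, w_-] \le \mathcal E[w_-, w_+] + \int_\Omega V_-\, w_-^2 \,.
\]
Now I use antisymmetry together with Lemma \ref{lemma difference is negative}(i). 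The antisymmetry $w = -w_{\Phi_\lxi}$ on $\Slxi$ lets one rewrite, via the change of variables $\eta \mapsto \Phi_\lxi(\eta)$ and \eqref{eq:confdistance}, the full-sphere energy $\mathcal E[w_-, w_-]$ as an integral over $\Slxi \times \Slxi$ of the kernel $\frac{1}{|\xi-\eta|^n} - \frac{|J_\lxi(\eta)|^{1/2}}{|\xi-\Phi_\lxi(\eta)|^n}$ (and a similar manipulation for the cross term $\mathcal E[w_-, w_+]$). By Lemma \ref{lemma difference is negative}(i) this kernel is strictly positive on $\Slxi \times \Slxi$. Tracking all the terms, the positivity of this kernel should force $\mathcal E[w_-,w_-] \ge \mathcal E[w_-, w_+] + (\text{a genuinely positive remainder})$, so that one is left with an inequality of the form
\[
(\text{positive quadratic form in } w_-) \;\le\; \int_\Omega V_-\, w_-^2 \,,
\]
where, crucially, the left-hand side controls the *full* Beckner energy $\iint \frac{|w_-(\xi)-w_-(\eta)|^2}{|\xi-\eta|^n}$ of $w_-$ (since the pieces we dropped had a favorable sign).

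**Closing the argument via Beckner's inequality.** Now I invoke \eqref{beckner log sob} applied to $w_-$: with $q$-level bookkeeping, the log-Sobolev inequality gives $\mathcal E[w_-,w_-] \ge \tfrac{n C_n}{2}\int_\Sph w_-^2 \ln\frac{w_-^2 |\Sph|}{\|w_-\|_2^2}$, equivalently (after rearranging the entropy term, cf.\ the discussion below \eqref{beckner log sob}) a bound that converts into the Moser–Trudinger-type statement that $\int_\Omega V_-\, w_-^2 \le \mathcal E[w_-,w_-]$ can only hold if $\int_\Omega e^{2V_-/C_n} \ge |\Sph|$. Concretely: by Jensen applied to the probability measure $w_-^2\,\diff\xi / \|w_-\|_2^2$ on $\Omega$,
\[
\int_\Omega V_-\, w_-^2 - \frac{C_n}{2}\int_\Omega w_-^2 \ln\frac{w_-^2}{\|w_-\|_2^2}\,\diff\xi
\;\le\; \frac{C_n}{2}\,\|w_-\|_2^2 \,\ln\!\left( \frac{1}{\|w_-\|_2^2}\int_\Omega e^{2V_-/C_n}\, w_-^2 \right)
\]
— hmm, one must be careful which measure Jensen is applied to; the clean version is $\int_\Omega V_- w_-^2 \le \frac{C_n}{2}\int w_-^2 \ln \frac{w_-^2}{\|w_-\|_2^2} + \frac{C_n}{2}\|w_-\|_2^2 \ln\frac{\int_\Omega e^{2V_-/C_n}}{|\Sph|} + \frac{C_n}{2}\|w_-\|_2^2 \ln|\Sph|$, obtained from Young's inequality $ab \le a\ln a - a + e^b$. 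Combining this with the energy lower bound from Beckner forces $\|w_-\|_2^2 \ln\frac{\int_\Omega e^{2V_-/C_n}}{|\Sph|} \ge 0$, hence $\int_\Omega e^{2V_-/C_n} \ge |\Sph|$, contradicting the hypothesis. Therefore $w_- \equiv 0$ on $\Omega$, i.e.\ $w \ge 0$ a.e.\ on $\Omega$.

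**Main obstacle.** I expect the delicate point to be the second paragraph: rigorously rewriting the full-sphere energies $\mathcal E[w_-,w_-]$ and $\mathcal E[w_-,w_+]$ as integrals over $\Slxi\times\Slxi$ with the sign-definite kernel from Lemma \ref{lemma difference is negative}, and verifying that the discarded terms genuinely have the right sign — this requires splitting $\Sph\times\Sph$ into the four regions $\Slxi\times\Slxi$, $\Slxi\times\Slxi^c$, etc., applying the antisymmetry substitution on each, and carefully checking that $w_+$ (which need not be antisymmetric, only nonnegative) does not spoil the sign in the cross term. One must also take care that $w_- \in \mathcal D$ — which follows since $x \mapsto x_-$ is a normal contraction and $\mathcal D$ is stable under such contractions, as the Beckner quadratic form is a Dirichlet form — and that $\|w_-\|_2 > 0$ so that the logarithms above make sense, which is exactly the contradiction hypothesis.
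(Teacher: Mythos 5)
There is a genuine gap, and it traces back to your first move.

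\textbf{The test function is not admissible.} You claim that assumption \eqref{assumptions on w narrow region mp 2} forces $\supp w_- \subset \overline\Omega$, so that $\varphi = w_-$ qualifies as a test function in \eqref{assumptions on w narrow region mp 1}. This is false. Assumption \eqref{assumptions on w narrow region mp 2} only controls $w$ on $\Slxi \setminus \Omega$; it says nothing about $(\Slxi)^c$. And the antisymmetry $w = -w_{\Phi_\lxi}$ on $\Slxi$ forces $w$ to change sign across the boundary of the cap: if $w(\eta)>0$ for $\eta \in \Slxi$, then $w(\Phi_\lxi(\eta)) = -J_{\Phi_\lxi}(\eta)^{-1/2}\, w(\eta) < 0$, so $w_->0$ on a set of positive measure in $(\Slxi)^c \subset \Omega^c$ as soon as $w_+\not\equiv 0$ on $\Slxi$. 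Hence $w_-$ does not vanish on $\Omega^c$ and cannot be inserted into \eqref{assumptions on w narrow region mp 1}. The admissible choice is $v := \1_\Omega w_-$, which by \eqref{assumptions on w narrow region mp 2} equals $\1_{\Slxi} w_-$; the paper then faces the nontrivial task of showing $v\in\mathcal D$ and $\mathcal E[v,v]\le -\mathcal E[v,w]$ (its Lemma~\ref{cutting}), since cutting a function off to a region can in general increase the nonlocal energy.

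\textbf{The consequence: the Gibbs step cannot close.} Because you kept the full-sphere $w_-$, the probability density in your Jensen/Young step is $w_-^2/\|w_-\|_2^2$ on all of $\Sph$ (and, as above, genuinely charges $(\Slxi)^c$), not a density on $\Omega$. Running the combination of Beckner and the Gibbs variational principle with that density, one obtains $\ln|\Sph| \le \ln\bigl(\int_\Omega e^{2V_-/C_n} + |\Sph\setminus\Omega|\bigr)$, i.e.\ $|\Omega| \le \int_\Omega e^{2V_-/C_n}$, which is always true since $V_-\ge 0$ makes the integrand $\ge 1$. No contradiction with the hypothesis $\int_\Omega e^{2V_-/C_n} < |\Sph|$. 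The argument only bites if the density lives on $\Omega$, which is exactly why the paper tests against $v=\1_\Omega w_-$ rather than $w_-$. (As a smaller point: your sign manipulation of $\mathcal E[w_-,w_+]$ has an extra minus, and the Beckner constant should be $C_n/2$, not $nC_n/2$; but these are cosmetic next to the two issues above.) The right ingredients are all present in your sketch --- Beckner, the Gibbs inequality, and the kernel positivity from Lemma~\ref{lemma difference is negative} --- but they have to be applied to the cut-off function $v$, and the step you flagged as the ``main obstacle'' (controlling $\mathcal E[v,v]$ in terms of $-\mathcal E[v,w]$ for this $v$) is precisely the technical core that must be done carefully; the paper devotes a separate lemma (with an $\epsilon$-regularization of the kernel, symmetric under $\Phi_\lxi$, followed by a Schwarz and monotone convergence argument) to it.
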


We prove Lemma \ref{lemma max principle narrow region} in Section \ref{subsection proof of narrow region} below.

The second lemma gives a strong maximum principle.

\begin{lemma}[Strong maximum principle]
\label{lemma strong max principle}
Let $\lambda > 0$ and $\xi_0 \in \Sph \setminus \{S\}$ and let $V:\Slxi\to\R$ be a measurable function. If $w\in\mathcal D$ is antisymmetric with respect to $\Slxi$, satisfies $V_+ \min\{w,1\} \in L^1_{\rm loc}(\Slxi)$, as well as
\begin{equation}
\label{assumptions on w strong mp 1}
\mathcal E[\varphi, w] + \int_{\Sigma_\lxi} \varphi V w \geq 0
\qquad\text{for all}\ 0\leq\varphi\in\mathcal D \text{ with } \varphi=0 \text{ on } (\Slxi)^c 
\end{equation}
and
\begin{equation}
\label{assumptions on w strong mp 2}
w \geq 0 \,\, \text{ a.e.~on } \Slxi,
\end{equation}
then either $w \equiv 0$ on $\Sph$ or $w > 0$ a.e.~on $\Slxi$.
\end{lemma}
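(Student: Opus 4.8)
The plan is to prove the strong maximum principle (Lemma \ref{lemma strong max principle}) by combining the small volume maximum principle already at hand with a Harnack-type propagation of positivity, working entirely at the level of the quadratic form since $w$ has no regularity. First I would observe that, by the small volume maximum principle (Lemma \ref{lemma max principle narrow region}) applied on small balls, one cannot directly conclude anything because the hypothesis there involves $\int_\Omega e^{2V_-/C_n}$, which need not be finite; but the local integrability assumption $V_+\min\{w,1\}\in L^1_{\rm loc}$ together with antisymmetry is exactly what controls the relevant terms. The key structural fact I would use is the following dichotomy: let $Z := \{\eta \in \Slxi : w(\eta) = 0\}$ (defined up to null sets), and suppose $Z$ has positive measure; I want to show $w\equiv 0$.

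The heart of the argument is a \emph{test-function computation on the zero set}. Take $\varphi = (w-\varepsilon)_- \chi$ or, more to the point, exploit that for any nonnegative $\varphi$ supported in $\Slxi$ one has, from \eqref{assumptions on w strong mp 1},
\[
\mathcal E[\varphi,w] \geq -\int_{\Slxi}\varphi V w \geq -\int_{\Slxi}\varphi V_+ w \,.
\]
Now, using antisymmetry, $\mathcal E[\varphi,w]$ can be rewritten as an integral over $\Slxi\times\Slxi$ with a \emph{positive} kernel obtained from Lemma \ref{lemma difference is negative}(i): writing out the double integral and folding the $(\Slxi)^c$ part through the conformal map $\Phi_\lxi$, the bilinear form becomes
\[
\mathcal E[\varphi,w] = \tfrac12\iint_{\Slxi\times\Slxi} K_\lxi(\xi,\eta)\,(\varphi(\xi)-\varphi(\eta))(w(\xi)-w(\eta))\,\diff\xi\diff\eta + (\text{boundary-type terms with sign})\,,
\]
where $K_\lxi(\xi,\eta) = |\xi-\eta|^{-n} - |J_\lxi(\eta)|^{1/2}|\xi-\Phi_\lxi(\eta)|^{-n} > 0$ by \eqref{difference is negative on Sph}. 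Choosing $\varphi$ to be (a truncation of) the indicator-like function $1$ on a ball $B$ where one wants to show $w>0$, and using that $w=0$ on the positive-measure set $Z\cap B$ while $w\geq 0$ elsewhere, the left side is controlled by $-\iint_{Z\times (\Slxi\setminus Z)} K_\lxi(\xi,\eta) w(\eta)\,\diff\xi\diff\eta \leq 0$ up to the $V_+$ term which is handled by $\min\{w,1\}$-truncation and local integrability; since $K_\lxi > 0$ strictly, this forces $w(\eta)=0$ for a.e. $\eta$ in a neighborhood, i.e. the zero set is (relatively) open modulo null sets.

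Concretely I would proceed as follows. Step 1: reduce to showing that $Z$ has zero measure or full measure in $\Slxi$, which by connectedness of $\Slxi$ follows once we show $Z$ is, up to null sets, both open and closed — here "open" means: a.e.\ point of $Z$ has a neighborhood contained in $Z$ up to null sets. Step 2: fix a ball $B \subset\subset \Slxi$ with $|Z\cap B|>0$; plug $\varphi_\delta = \min\{(\delta - w)_+/\delta, 1\}\cdot\psi$ for a fixed cutoff $\psi\geq 0$ supported in a slightly larger ball, into \eqref{assumptions on w strong mp 1}; note $\varphi_\delta \in\mathcal D$ by the energy bound and $\varphi_\delta$ is admissible. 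Step 3: expand $\mathcal E[\varphi_\delta, w]$ using the positive kernel $K_\lxi$; the diagonal contribution $\iint K_\lxi (\varphi_\delta(\xi)-\varphi_\delta(\eta))(w(\xi)-w(\eta))$ splits into a part on $\{w<\delta\}\times\{w<\delta\}$ (which is $O(\delta)$ small as $\delta\to 0$ by dominated convergence using the energy integrability) and a part coupling $\{w<\delta\}$ to $\{w\geq\delta\}$, which has a sign: there $\varphi_\delta(\xi)-\varphi_\delta(\eta) \leq 0$ when $w(\xi)\geq w(\eta)$, giving a nonpositive contribution up to $O(\delta)$. Step 4: the potential term $\int \varphi_\delta V w$ is bounded using $|\varphi_\delta V w| \leq \psi V_+ \min\{w,1\} \in L^1$, and $\varphi_\delta V w \to 0$ pointwise a.e.\ on $\{w>0\}$ and equals $V w \psi = 0$ on $\{w=0\}$, so it vanishes as $\delta\to 0$. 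Step 5: passing $\delta\to 0$, the inequality \eqref{assumptions on w strong mp 1} collapses to
\[
-\iint_{Z\times(\Slxi\setminus Z)} K_\lxi(\xi,\eta)\,\psi(\xi)\,w(\eta)\,\diff\xi\diff\eta \geq 0\,,
\]
and since $K_\lxi > 0$ strictly and $w \geq 0$, this forces $w = 0$ a.e.\ on $\Slxi\setminus Z$ wherever it is "seen" by $Z\cap\supp\psi$ through the kernel — but $K_\lxi$ is strictly positive everywhere off the diagonal, so $w\equiv 0$ on $\Slxi$. (If $|Z|=0$ there is nothing to prove: $w>0$ a.e.) Step 6: if $w\equiv 0$ on $\Slxi$, antisymmetry gives $w = -w_{\Phi_\lxi} = 0$ also on $(\Slxi)^c$, hence $w\equiv 0$ on $\Sph$.

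The main obstacle I expect is \textbf{Step 3–4}: making the truncation argument rigorous with only $w\in\mathcal D$ (no continuity, no pointwise equation) and only $V_+\min\{w,1\}\in L^1_{\rm loc}$. One must verify that $\varphi_\delta$ is a legitimate test function in $\mathcal D$ (chain rule for the Gagliardo-type seminorm, using that $\min\{\cdot,1\}$ and $(\cdot)_+$ are $1$-Lipschitz, plus the cutoff $\psi$), that the various pieces of the double integral are genuinely absolutely convergent so the splitting is allowed, and — most delicately — that the coupling term between $\{w<\delta\}$ and $\{w\geq\delta\}$ really has the claimed sign and does not blow up as $\delta\to 0$. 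The subtlety is that the set $\{0<w<\delta\}$ may have contributions of all sizes; controlling these requires the full strength of the energy finiteness $\mathcal E[w,w]<\infty$ (to kill the near-diagonal terms) combined with the monotone/dominated convergence argument on the coupling terms. This is precisely the point the paper flags as "considerably more involved than its counterpart in \cite{ChLiLi}", where continuity of $w$ would make all of this immediate.
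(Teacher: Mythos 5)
Your proposal captures the same structural skeleton as the paper's proof, but via a genuinely different test function. You correctly identify the two key ingredients: (a) folding the quadratic form onto $\Slxi\times\Slxi$ using antisymmetry and Lemma~\ref{lemma difference is negative}, which produces a strictly positive kernel plus a nonnegative potential term (this is precisely the paper's Lemma~\ref{lemma E on A}, with the ``boundary-type terms'' being $\int_\Sigma \varphi\,\tilde V w$ for an explicit $\tilde V>0$); and (b) testing with an approximation of $\1_Z$, where $Z=\{w=0\}$, so that in the limit the only surviving term is $-\iint_{Z\times Z^c} K\,\psi\,w$, forcing $w\equiv 0$ on $\Slxi$ and then on all of $\Sph$ by antisymmetry. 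The paper abstracts step (b) into a general metric-measure-space Proposition (\ref{lemma strong max general}) and uses the classical logarithmic test function $\varphi = \zeta^2/(w+\epsilon)$, multiplying the resulting inequality through by $\epsilon$. Its algebraic identity
\[
(\varphi(x)-\varphi(y))(v(x)-v(y)) = \frac{-(\zeta(x)v(y)-\zeta(y)v(x))^2 + v(x)v(y)(\zeta(x)-\zeta(y))^2 + \epsilon(\cdots)}{(v(x)+\epsilon)(v(y)+\epsilon)}
\]
exposes the dominant term and the $O(\epsilon)$ error terms at a glance, so that ``multiply by $\epsilon$ and pass to the limit'' is clean; your linear cutoff $\varphi_\delta=\psi\min\{(\delta-w)_+/\delta,1\}$ requires the more laborious casework on $Z\times Z$, $\{0<w<\delta\}^2$, cross terms, etc.\ that you describe in Steps 3--4.

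Two specific imprecisions in your write-up, both salvageable but worth noting. First, the $\{w<\delta\}^2$ contribution is not ``$O(\delta)$ small by dominated convergence'': ignoring the cutoff it equals $-\iint_{B^2} K(w(\xi)-w(\eta))^2/\delta$ with $B=\{0<w<\delta\}$, which need not tend to zero. What saves you is its \emph{sign}: it is nonpositive, so it only strengthens the inequality; the cross-terms involving $\psi(\xi)-\psi(\eta)$ do vanish by dominated convergence since $|B|\to 0$. Second, the domination $|\varphi_\delta V w| \le \psi V_+\min\{w,1\}$ is false for the $V_-$ part; however $\int \varphi_\delta V_- w \ge 0$ works in your favor, so you should simply drop it and bound $\int\varphi_\delta V w \le \int\varphi_\delta V_+ w \le \int_{\supp\psi} V_+\min\{w,1\}\to 0$. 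This mirrors exactly how the paper handles $U_-$ in the proof of Proposition~\ref{lemma strong max general}, where only $U_+$ enters the bound. With these repairs your route closes, and it has the pedagogical merit of making the role of the zero set $Z$ very explicit; the paper's route has the merit of packaging the error analysis into one algebraic identity.
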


We prove Lemma \ref{lemma strong max principle} in Section \ref{subsection proof of strong max} below.


\subsection{Proof of Lemma \ref{lemma max principle narrow region}}
\label{subsection proof of narrow region}

To prove Lemma \ref{lemma max principle narrow region}, we use the following inequality. It is well known, but we include a proof for the sake of completeness.

\begin{lemma}
\label{lemma gibbs}
Let $f$ and $g$ be measurable functions on a measured space $(X, \mu)$ such that $f\geq 0$ and $\int_X f\diff\mu =1$. Assume that $\int_X fg_-\diff\mu<\infty$. Then
$$
\int_X fg \diff \mu \leq \int_X f\ln f \diff \mu + \ln \int_X e^g \diff \mu \,.
$$
\end{lemma}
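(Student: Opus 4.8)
The plan is to reduce the statement to the elementary pointwise inequality
\[
ab \le a\ln a - a + e^{b} \qquad \text{for all } a \ge 0,\ b\in\R,
\]
with the convention $0\ln 0 = 0$. This is the Fenchel--Young inequality for the convex function $a \mapsto a\ln a - a$ and its Legendre transform $b\mapsto e^b$; it follows immediately from $1 + s \le e^s$ (applied with $s = b - \ln a$ when $a>0$, the case $a=0$ being trivial).

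Since the asserted inequality is sensitive to the scaling of $g$, I would not insert $a = f(x)$, $b = g(x)$ directly, but rather apply the pointwise bound to $g$ shifted by an arbitrary constant $c\in\R$: for a.e.\ $x\in X$,
\[
f(x)g(x) = f(x)\bigl(g(x)-c\bigr) + c\,f(x) \le f(x)\ln f(x) - f(x) + e^{g(x)-c} + c\,f(x).
\]
Before integrating I would dispose of the trivial cases: if $\int_X e^g\diff\mu = +\infty$ the right-hand side of the assertion is $+\infty$ and nothing is to be proved, so I may assume $Z := \int_X e^g\diff\mu \in (0,\infty)$. The hypothesis $\int_X f g_-\diff\mu < \infty$ together with the lower bound $f\ln f \ge -e^{-1}$ ensures that $\int_X fg\diff\mu$ and $\int_X f\ln f\diff\mu$ are well defined with values in $(-\infty,+\infty]$ (for the applications in this paper $\mu$ is a restriction of the surface measure on $\Sph$, hence finite). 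Integrating the displayed inequality and using $\int_X f\diff\mu = 1$ then gives, for every $c\in\R$,
\[
\int_X fg\diff\mu \le \int_X f\ln f\diff\mu - 1 + e^{-c} Z + c .
\]
Optimizing the right-hand side over $c$ — the minimum is attained at $c = \ln Z$ — yields exactly $\int_X f\ln f\diff\mu + \ln Z$, which is the claim.

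An equivalent route, which avoids the auxiliary constant $c$, is to pass to the probability measure $\diff\nu := Z^{-1}e^g\diff\mu$ and to note that the asserted inequality says precisely that the relative entropy of $f\diff\mu$ with respect to $\nu$ is nonnegative, which is Jensen's inequality for $t \mapsto t\ln t$ (or, once more, the bound $t\ln t \ge t-1$ integrated against $\nu$). I do not expect any genuine obstacle here, since this is a classical inequality; the only point requiring a little care is the bookkeeping with $\pm\infty$, i.e.\ verifying that no integral is an indeterminate $\infty-\infty$ before integrating the pointwise inequality and rearranging — which is exactly what the reduction to $Z<\infty$ and the hypothesis on $fg_-$ are for.
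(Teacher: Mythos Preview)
Your proof is correct and is essentially the same as the paper's: both rest on the elementary inequality $e^{s}\ge 1+s$ applied with $s=g-\ln f-c$, and differ only in that the paper takes the optimal constant $c=\int_X f(g-\ln f)\diff\mu$ from the outset, whereas you introduce a free shift and minimise at the end (or, equivalently, recast the statement as nonnegativity of relative entropy). Your more explicit bookkeeping of the $\pm\infty$ cases is a welcome addition, since the paper's short argument tacitly assumes these integrals are well defined.
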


\begin{proof}
From the elementary inequality $e^{\alpha}\geq 1+ \alpha$ for all $\alpha\in\R$, we obtain
$$
e^{g-\ln f - \int_X f(g-\ln f) \diff \mu} \geq 1+ g-\ln f - \int_X f(g-\ln f) \diff \mu\,.
$$
Multiplying this inequality by $f$ and integrating we obtain
$$
\int_X e^{g - \int_X f(g-\ln f) \diff \mu} \diff \mu \geq 1 \,,
$$
which is the claimed inequality.
\end{proof}

\begin{lemma}\label{cutting}
Let $\lambda >0$ and $\xi_0 \in \Sph \setminus \{S\}$ and let $w \in \mathcal D$ be antisymmetric with respect to $\Slxi$. Then $v := \1_{\Slxi}\, w_{-}\in\mathcal D$ and
$$
\mathcal E[v,v]\leq-\mathcal E[v,w] \,.
$$
\end{lemma}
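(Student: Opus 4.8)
The plan is to exploit antisymmetry of $w$ together with the "conformal reflection" structure of the bilinear form $\mathcal E$. Recall that $w$ antisymmetric means $w = -w_{\Phi_\lxi}$ a.e.\ on $\Slxi$, i.e.\ $w(\eta) = -J_\lxi(\eta)^{1/2} w(\Phi_\lxi(\eta))$ for $\eta \in \Slxi$, where $\Phi_\lxi$ is an involution exchanging $\Slxi$ and $\Slxi^c$ on the sphere. First I would check that $v := \1_{\Slxi} w_-$ lies in $\mathcal D$: since $w \in \mathcal D \subset L^2(\Sph)$ we have $v \in L^2$, and the Gagliardo-type energy of $v$ is finite because truncation (taking negative part) and multiplication by the indicator of $\Slxi$ can only decrease the double integral defining $\mathcal E$, after one uses the antisymmetry to control the "cross" contribution between $\Slxi$ and its complement — this is exactly the content of the inequality we are about to prove, so the membership $v\in\mathcal D$ will come out of the same computation.

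The main computation: expand $\mathcal E[v,w] + \mathcal E[v,v]$ (equivalently, show $\mathcal E[v, v+w] \le 0$) by splitting the domain of integration $\Sph \times \Sph$ according to whether each variable lies in $\Slxi$ or $\Slxi^c$. On $\Slxi$ write $w = w_+ - w_-$, so that $v = \1_{\Slxi} w_-$ "sees" $-w$ on the set $\{w<0\}\cap\Slxi$ and vanishes elsewhere. On $\Slxi^c$, use the change of variables $\eta \mapsto \Phi_\lxi(\eta)$ (a conformal involution) together with the transformation rule \eqref{eq:confdistance} — concretely $|\xi - \Phi_\lxi(\eta)|^{-n} J_\lxi(\eta)^{1/2}$ is the kernel that appears — and the antisymmetry $w\circ\Phi_\lxi \cdot J_\lxi^{1/2} = -w$ on $\Slxi$ to rewrite all integrals over $\Slxi^c$ as integrals over $\Slxi$. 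This is the standard "folding" trick for nonlocal moving-plane arguments. After folding, the kernel in the reflected integrals acquires the factor appearing in Lemma \ref{lemma difference is negative}(i), namely one compares $|\xi-\eta|^{-n}$ against $J_\lxi(\eta)^{1/2}|\xi - \Phi_\lxi(\eta)|^{-n}$, and the sign of that comparison is exactly what makes the cross terms have the right sign.

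Once everything is expressed as integrals over $\Slxi \times \Slxi$, I expect to arrive at an expression of the form
\[
\mathcal E[v,v] + \mathcal E[v,w] = -\frac12 \iint_{\Slxi\times\Slxi} \bigl( \text{kernel} \bigr)\, \bigl(\text{nonnegative combination of }w_-(\xi), w_-(\eta), w_+(\xi), w_+(\eta)\bigr) \le 0,
\]
where the kernel is a sum of $|\xi-\eta|^{-n} > 0$ and of terms $|\xi-\eta|^{-n} - J_\lxi(\eta)^{1/2}|\xi-\Phi_\lxi(\eta)|^{-n} > 0$ (by Lemma \ref{lemma difference is negative}(i), with roles of variables possibly swapped, using symmetry of the first kernel and the conformal symmetry of the folded one). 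The pointwise inequality to verify is of the type $(a-b)(a'-b') \ge (\text{products of negative parts})$ after the truncation, i.e.\ one checks that for real numbers $s,t$ one has $(s_- - t_-)^2 \le -(s_- - t_-)(s - t)$ and that the remaining genuinely "reflected" terms pair $w_-$ against $w_+$ with a favorable sign; this elementary bookkeeping, rather than any hard estimate, is the main obstacle — it is easy to mismanage the four regions and the four terms. Finiteness of $\mathcal E[v,v]$ then follows a posteriori from $\mathcal E[v,v]\le -\mathcal E[v,w] \le \mathcal E[w,w]^{1/2}\mathcal E[v,v]^{1/2}$ (Cauchy–Schwarz), giving $\mathcal E[v,v]^{1/2} \le \mathcal E[w,w]^{1/2} < \infty$, which simultaneously confirms $v \in \mathcal D$.
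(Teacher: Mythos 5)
Your outline matches the paper's strategy in its essentials — expand $\mathcal E[v,v]+\mathcal E[v,w]$, use that $v(\xi)\,(v(\xi)+w(\xi))=w_-(\xi)w_+(\xi)=0$ on $\Slxi$ (your pointwise inequality $(s_--t_-)(s_+-t_+)\le 0$ is the same observation), fold $\Slxi^c$ onto $\Slxi$ via $\Phi_\lxi$, invoke Lemma~\ref{lemma difference is negative}(i) for the sign of the folded kernel, and close with Cauchy--Schwarz against $\mathcal E[w,w]$. But there is a genuine gap at the point you yourself flag: you acknowledge that $v\in\mathcal D$ ``will come out of the same computation,'' and then at the end you invoke Cauchy--Schwarz on $\mathcal E[v,v]$ and $\mathcal E[v,w]$ as if they were already finite numbers. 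Before $v\in\mathcal D$ is established, $\mathcal E[v,v]$ may be $+\infty$ and $\mathcal E[v,w]$ may fail to be an absolutely convergent integral, so the rearrangements that constitute the ``folding'' step (splitting $\Sph\times\Sph$ into four pieces, changing variables on the complement pieces, recombining) are not legitimate, and the chain $\mathcal E[v,v]\le -\mathcal E[v,w]\le \mathcal E[w,w]^{1/2}\mathcal E[v,v]^{1/2}$ is circular. Note that the integrand $(v(\xi)-v(\eta))\,\big((v+w)(\xi)-(v+w)(\eta)\big)\,|\xi-\eta|^{-n}$ is \emph{not} pointwise nonpositive on all of $\Sph\times\Sph$ — on $\Slxi\times\Slxi^c$ it equals $w_-(\xi)\big(w_+(\xi)-w(\eta)\big)|\xi-\eta|^{-n}$, which has no fixed sign — so one cannot even fall back on ``integral of a nonpositive function is $\le 0$''; the sign only emerges after recombining pieces, which requires justification.

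The paper's fix is a regularization of the kernel: it cuts out an $\epsilon$-neighborhood $E_\epsilon$ of the diagonal, chosen symmetrically with respect to $\Phi_\lxi$ in all four blocks $\Slxi\times\Slxi$, $\Slxi\times\Slxi^c$, etc., so that the truncated kernel $k_\epsilon$ is bounded and the truncated forms $\mathcal E_\epsilon[v,v]$, $\mathcal E_\epsilon[v,w]$ are finite. All the algebra (your folding, the use of Lemma~\ref{lemma difference is negative}, the Cauchy--Schwarz step) is then carried out for $\mathcal E_\epsilon$, yielding the uniform bound $\mathcal E_\epsilon[v,v]\le \mathcal E_\epsilon[w,w]\le \mathcal E[w,w]$; letting $\epsilon\to 0$ by monotone convergence gives $v\in\mathcal D$, and only then does one return to $\mathcal E_\epsilon[v,v]+\mathcal E_\epsilon[v,w]\le 0$ and pass to the limit to get the stated inequality. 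The symmetric choice of $E_\epsilon$ is not cosmetic: it is what guarantees that $(\xi,\Phi_\lxi(\eta))\in E_\epsilon$ implies $(\xi,\eta)\in E_\epsilon$, so that the truncated kernel difference $k_\epsilon(\xi,\eta)-J_\lxi(\eta)^{1/2}k_\epsilon(\xi,\Phi_\lxi(\eta))$ is still nonnegative on $\Slxi\times\Slxi$. Your proposal needs this (or an equivalent truncation/approximation) to be correct; without it, it is a formal calculation rather than a proof.
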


\begin{proof}
	For $\epsilon>0$ set
	\begin{align*}
	E_\epsilon & := \{(\xi,\eta)\in\Sigma_\lxi\times\Sigma_\lxi :\, |\xi-\eta|<\epsilon\} \cup \{(\xi,\eta)\in\Sigma_\lxi\times\Sigma_\lxi^c :\, |\xi-\Phi_\lxi(\eta)|<\epsilon\} \\
	& \qquad \cup \{(\xi,\eta)\in\Sigma_\lxi^c \times\Sigma_\lxi :\, |\Phi_\lxi(\xi)-\eta|<\epsilon\} \\
	& \qquad \cup \{(\xi,\eta)\in\Sigma_\lxi^c\times\Sigma_\lxi^c :\, |\Phi_\lxi(\xi)-\Phi_\lxi(\eta)|<\epsilon\} \,,
	\end{align*}
	$$
	k_\epsilon(\xi,\eta) := \1_{E_\epsilon^c}(\xi,\eta)\, |\xi-\eta|^{-n}
	$$
	and
	$$
	\mathcal E_\epsilon[f,g] := \frac{1}{2}\int_{\Sph}\int_{\Sph}(u(\xi)-u(\eta))(v(\xi)-v(\eta))
	k_\epsilon(\xi,\eta) \diff\xi\diff\eta \,.
	$$
	Since $k_\epsilon$ is bounded, both $\mathcal E_\epsilon[v,v]$ and $\mathcal E_\epsilon[w,v]$ are finite and we have
	\begin{align*}
	& \mathcal E_\epsilon[v,v]+ \mathcal E_\epsilon[v,w]\\
	& =\frac{1}{2}\int_{\Sph}\int_{\Sph}\left( |v(\xi)-v(\eta)|^2+[v(\xi)-v(\eta)][w(\xi)-w(\eta)] \right) 
	k_\epsilon(\xi,\eta) \diff\xi\diff\eta\\
	& =\int_{\Sph}\int_{\Sph} \left( v(\xi)(v(\xi)+w(\xi)) - v(\xi)(v(\eta)+w(\eta)) \right) 
	k_\epsilon(\xi,\eta) \diff\xi\diff\eta\\
	& =-
	\int_{\Sph}\int_{\Sph} v(\xi)(v(\eta)+w(\eta))
	k_\epsilon(\xi,\eta) \diff\xi\diff\eta \,,
	\end{align*}
	where we used the fact that $v(\xi)(v(\xi)+w(\xi))=0$ on $\Sph$.
	
	By a change of variables and the antisymmetry of $w$, we have
	\begin{align*}
	& -\int_{\Sph}\int_{\Sph} v(\xi)(v(\eta)+w(\eta)) k_\epsilon(\xi,\eta) \diff\xi\diff\eta\\
	& =-\int_{\Sigma_{\lambda,\xi_{0}}}\int_{\Sigma_{\lambda,\xi_{0}}}
	w(\xi)_- w(\eta)_+ k_\epsilon(\xi,\eta) \diff\xi\diff\eta
	+ \int_{\Sigma_{\lambda,\xi_{0}}^c}\int_{\Sigma_{\lambda,\xi_{0}}} w(\xi)_- w(\eta)_- k_\epsilon(\xi,\eta) \diff\xi\diff\eta \\
	& \quad - \int_{\Sigma_{\lambda,\xi_{0}}^c}\int_{\Sigma_{\lambda,\xi_{0}}} w(\xi)_- w(\eta)_+ k_\epsilon(\xi,\eta) \diff\xi\diff\eta \\
	& = - \int_{\Sigma_{\lambda,\xi_{0}}}\int_{\Sigma_{\lambda,\xi_{0}}}w(\xi)_-w(\eta)_+ \left( k_\epsilon(\xi,\eta) - J_{\lambda,\xi_{0}}(\eta)^{1/2} k_\epsilon(\xi,\Phi_\lxi(\eta)) \right) \diff\xi\diff\eta  \\
	& \quad
	- \int_{\Sigma_{\lambda,\xi_{0}}^c}\int_{\Sigma_{\lambda,\xi_{0}}} w(\xi)_- w(\eta)_+ k_\epsilon(\xi,\eta) \diff\xi\diff\eta \,.
	\end{align*}
	The second double integral on the right side is clearly nonnegative. Moreover, it follows from Lemma \ref{lemma difference is negative} and the choice of $E_\epsilon$ (more precisely, the fact that $(\xi, \Phi(\eta)) \in E_\epsilon$ implies $(\xi, \eta) \in E_\epsilon$) that
	$$
	k_\epsilon(\xi,\eta) - J_{\lambda,\xi_{0}}(\eta)^{1/2} k_\epsilon(\xi,\Phi_\lxi(\eta)) \geq 0
	\qquad\text{for all}\ \xi,\eta\in\Sigma_\lxi \,.
	$$
	Therefore also the first double integral on the right side is nonnegative and we conclude that
	\begin{equation}
	\label{eq:ineqregularized}
	\mathcal E_\epsilon[v,v] + \mathcal E_\epsilon[v,w] \leq 0 \,.
	\end{equation}
	
	By the Schwarz inequality, we have $-\mathcal E_\epsilon[v,w]\leq \sqrt{\mathcal E_\epsilon[v,v]} \sqrt{\mathcal E_\epsilon[w,w]}$ and therefore the previous inequality implies that
	$$
	\mathcal E_\epsilon[v,v] \leq \mathcal E_\epsilon[w,w] \,.
	$$
	Since $\mathcal E_\epsilon[w,w]\leq \mathcal E[w,w]<\infty$, we can let $\epsilon\to 0$ and use monotone convergence to deduce that $\mathcal E[v,v]<\infty$, that is, $v\in\mathcal D$. With this information we can return to \eqref{eq:ineqregularized} and let $\epsilon\to 0$ to obtain the inequality in the lemma.	
\end{proof}

Now we can give the proof of the small volume maximum principle.

\begin{proof}[Proof of Lemma \ref{lemma max principle narrow region}]
Let $\lambda$, $\xi_0$, $w$ and $\Omega$ be as in the assumptions and denote $v=\1_{\Omega} w_{-}$. Assumption \eqref{assumptions on w narrow region mp 2} implies that $v=\1_{\Sigma_\lxi}w_-$ and therefore, by Lemma \ref{cutting}, $v\in\mathcal D$ and $\mathcal E[v,v]\leq \mathcal E[v,w]$. Combining this with assumption \eqref{assumptions on w narrow region mp 1} (with $\varphi=v$), we obtain
\begin{equation}
\label{max princ eq 1} \mathcal E[v,v]  \leq -\mathcal E[v,w] \leq \int_\Omega v Vw =-\int_{\Omega} V w_{-}^2 \leq \int_{\Omega} V_- w_{-}^2 \,.
\end{equation}
On the other hand, by Beckner's inequality \eqref{beckner log sob},
\begin{align}
& \mathcal E[v,v] =\frac{1}{2}\int_{\Sph}\int_{\Sph}\frac{|v(\xi)-v(\eta)|^2}{|\xi-\eta|^n}\diff\xi\diff\eta \geq \frac{C_n}2 \int_{\Omega} w_-^2 \ln \frac{w_-^2 |\Sph|}{\|w_- 1_\Omega \|_2^2}\diff \eta.  \label{max princ eq 2}
\end{align}
We now argue by contradiction and assume that $v \nequiv 0$. Then we may define $\rho = \|w_- \1_\Omega\|_2^{-2} \1_{\Omega} w_-^2$
and rewrite inequalities \eqref{max princ eq 1} and \eqref{max princ eq 2} as
\begin{equation}
\label{inequality with rho} \int_\Omega \rho \ln \rho \diff \eta + \ln |\Sph| \leq \frac{2}{C_n} \int_\Omega V_- \rho \diff \eta.
\end{equation}
Notice that $\rho \geq 0$ and $\int_\Sph \rho = 1$. Therefore we may apply Lemma \ref{lemma gibbs} with $f = \rho$, $g= \frac{2}{C_n} V_-$ and $X = \Omega$ to \eqref{inequality with rho} and deduce that
\[ |\Sph| \leq \int_\Omega e^\frac{2 V_-}{C_n} \diff \eta \, . \]
This contradicts the assumption of the lemma and therefore we conclude that $v\equiv 0$, that is, $w\geq 0$ a.~e.~on $\Omega$.
\end{proof}


\subsection{A general form of the strong maximum principle}

We deduce Lemma~\ref{lemma strong max principle} from the following strong maximum principle, which holds for a general interaction kernel $k$ and arbitrary, not necessarily antisymmetric, functions $w$. 

More precisely, let $(X,d,\mu)$ be a metric measure space and suppose that the kernel $k: X \times X \to (0,\infty)\cup\{\infty\}$ is such that
\begin{equation}
\label{bound k} 
\iint_{K \times X} k(x,y)d(x,y)^2 \diff\mu(x) \diff\mu(y) <\infty
\qquad\text{for any compact}\ K \subset X \,. 
\end{equation}
For simplicity of notation, we will also assume that $k(x,y)=k(y,x)$ for all $x,y\in X$, although this is not really necessary. For a measurable function $u$ on $X$, let
$$
\mathcal{I}[w] := \frac12 \iint_{X \times X} k(x,y) (w(x)-w(y))^2 \diff \mu(x) \diff \mu(y)
$$
and set
\[ D(\mathcal I) := \{ w : X \to \R :\  w \ \text{measurable} \,,\ \mathcal{I}[w] < \infty \}. \]
Moreover, for $v,w\in D(\mathcal I)$, let
\[ \mathcal I[v,w] := \frac12 \iint_{X \times X} k(x,y) (v(x)-v(y)) (w(x) - w(y)) \diff \mu(x) \diff \mu(y) \,. \]
With this notation, we can state the following general strong maximum principle.

\begin{proposition}
\label{lemma strong max general}
Let $k$ satisfy \eqref{bound k} and let $U$ be a measurable function on $X$. Assume that $0\leq v\in D(\mathcal I)$ satisfies $U_+ \min\{v,1\} \in L^1_{\rm loc}(X)$ and
\begin{equation}
\label{mp general ineq}
 \mathcal I [\varphi,v] + \int_X \varphi\, U v\diff\mu \geq 0 
\qquad\text{for all}\ 0 \leq\varphi \in D(\mathcal I) \ \text{with compact support} \,.
 \end{equation}
Then either $v \equiv 0$ or $v > 0$ a.e.~on $X$.
\end{proposition}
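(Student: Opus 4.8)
The strategy is the classical two-step argument for strong maximum principles adapted to the nonlocal, low-regularity setting: first show that the zero set of $v$ is (essentially) invariant under the dynamics encoded in $k$, then use the connectedness implicit in $k>0$ to conclude. Concretely, set $Z := \{x \in X : v(x) = 0\}$ (defined up to null sets) and suppose $\mu(Z) > 0$ and $\mu(X \setminus Z) > 0$; the goal is a contradiction.

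\textbf{Step 1: a truncated test function.} The main obstacle is that $v$ need not be bounded, so $v$ itself is not an admissible test function and products like $\varphi U v$ must be controlled only through the hypothesis $U_+ \min\{v,1\} \in L^1_{\mathrm{loc}}$. The remedy is to test \eqref{mp general ineq} with $\varphi = \eta^2\, \psi(v)$, where $\psi$ is a bounded, nonnegative, nondecreasing Lipschitz function that vanishes on $[0,\infty)$ only at $0$ and saturates — e.g.\ $\psi(t) = \min\{t,1\}$ or, better for the algebra, a smooth approximation — and $\eta$ is a Lipschitz cutoff with compact support. One checks $\varphi \in D(\mathcal I)$ using \eqref{bound k} and $v \in D(\mathcal I)$ (the Lipschitz truncation does not increase the Dirichlet-type energy by more than a local error; the cutoff contributes a finite term because of \eqref{bound k}). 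The potential term is then $\int_X \eta^2 \psi(v)\, U v\,\diff\mu$, whose negative part is dominated by $\int \eta^2 U_+ \min\{v,1\}\cdot v\,\diff\mu$... but since $\psi(v) v$ need not be locally integrable against $U_+$, one must instead choose $\psi(t) = t/(1+t)$ or simply work with $\varphi = \eta^2 \min\{v,\delta\}$ and later send $\delta \to 0$, so that $\varphi v \le \delta \varphi$ and the potential term is controlled by $\delta \int \eta^2 U_+ \min\{v,1\}\,\diff\mu < \infty$. This is the delicate point and the reason the proof is "considerably more involved" than in \cite{ChLiLi}.

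\textbf{Step 2: the energy inequality forces $v$ to vanish where $\eta \ne 0$ touches $Z$.} Plugging the test function in and expanding $\mathcal I[\varphi,v]$, the quadratic-form algebra (using $(\varphi(x)-\varphi(y))(v(x)-v(y))$ and the monotonicity/concavity of the truncation) yields, after discarding a nonnegative "energy" term, an inequality of the shape
\[
\iint_{X\times X} k(x,y)\,\bigl[\text{something nonnegative involving the truncation of }v\bigr]\,\diff\mu(x)\,\diff\mu(y) \;\le\; C\,\delta \int_{\operatorname{supp}\eta} U_+ \min\{v,1\}\,\diff\mu \,.
\]
Letting $\delta \to 0$ kills the right-hand side, and the surviving left-hand side is (a constant times) $\iint k(x,y)\,\mathbbm{1}_{Z}(x)\,\mathbbm{1}_{X\setminus Z}(y)\,\eta(x)^2\,\diff\mu(x)\,\diff\mu(y)$ or a comparable expression that is positive unless $k(x,y) = 0$ for $\mu\otimes\mu$-a.e.\ $(x,y)\in Z \times (X\setminus Z)$ with $x$ in a prescribed compact set. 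Since $k > 0$ everywhere, this forces $\mu(Z) = 0$ or $\mu(X\setminus Z) = 0$ once we let $\eta$ exhaust $X$. That is exactly the dichotomy claimed: either $v \equiv 0$ or $v > 0$ a.e.

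\textbf{Remark on the role of $k>0$.} The hypothesis that $k$ is strictly positive (not merely nonnegative) on all of $X \times X$ plays the part that connectedness of the domain plays in the local theory: it is what rules out a nontrivial splitting $X = Z \cup (X\setminus Z)$ into two positively-separated pieces. In the application to Lemma~\ref{lemma strong max principle} one takes $X = \Slxi$, $\mu$ the surface measure, $d$ the ambient distance, and $k(x,y)$ the antisymmetrized kernel $|\,x-y\,|^{-n} - J_\lxi(y)^{1/2}|x - \Phi_\lxi(y)|^{-n}$ combined with the contribution from the reflected part; positivity of this effective kernel is precisely the content of Lemma~\ref{lemma difference is negative}, and $U$ absorbs both $V$ and the constant $C_n$ from the logarithmic nonlinearity via $w \mapsto \mathbbm{1}_{\Slxi} w$. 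I expect Step~1 — producing an admissible test function and bounding the potential term under only the local integrability hypothesis — to be the real work; Steps~2 and~3 are then soft.
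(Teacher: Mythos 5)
Your overall plan — test with $\varphi = \eta^2\,\psi(v)$, extract a kernel-weighted integral over $Z \times Z^c$, and use strict positivity of $k$ to force $\mu(Z)\mu(Z^c)=0$ — has the right shape, and you correctly flagged Step~1 (producing an admissible test function under the weak integrability hypothesis) as the crux. But the concrete test functions you propose do not survive the estimate at that crux, and this is a genuine gap, not a cosmetic one.

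The issue is the bound on the potential term. With $\varphi = \eta^2 \min\{v,\delta\}$ you claim $\int \varphi\, U_+ v \lesssim \delta \int \eta^2\, U_+ \min\{v,1\}$. On the set $\{v>1\}$ (where $\min\{v,1\}=1$) the integrand is $\eta^2\,\delta\, v\, U_+$, which is $\delta v$ times $\eta^2 U_+\min\{v,1\}$ — and $\delta v$ is unbounded, so this is not $\lesssim \delta$ times an integrable quantity. The hypothesis gives $U_+\min\{v,1\}\in L^1_{\rm loc}$, not $U_+v\in L^1_{\rm loc}$, and your estimate would need the latter. The same problem kills $\psi(t)=t/(1+t)$ (then $\varphi v = \eta^2 v^2/(1+v)\sim \eta^2 v$ for large $v$). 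What is needed is a $\psi$ for which $\psi(v)\,v$ is \emph{uniformly bounded}, which forces $\psi$ to decay like $1/t$ at infinity — i.e.\ a decreasing function of $v$, not a truncation. The paper uses exactly $\varphi = \zeta^2/(v+\epsilon)$, so that $\varphi v = \zeta^2\,v/(v+\epsilon)\leq \zeta^2$, and the error term scales as $\epsilon\,\zeta^2\,v/(v+\epsilon)\leq \zeta^2\min\{v,\epsilon\}\leq\zeta^2\min\{v,1\}$, which is precisely what the hypothesis controls. Because $\psi(t)=1/(t+\epsilon)$ is decreasing, the sign argument for $\mathcal I[\varphi,v]$ can no longer rely on $(\psi(v(x))-\psi(v(y)))(v(x)-v(y))\geq 0$; instead the paper uses the algebraic identity
\[
(\varphi(x)-\varphi(y))(v(x)-v(y)) = \frac{-\bigl(\zeta(x)v(y)-\zeta(y)v(x)\bigr)^2 + v(x)v(y)\bigl(\zeta(x)-\zeta(y)\bigr)^2 + \epsilon\bigl(v(x)-v(y)\bigr)\bigl(\zeta(x)^2-\zeta(y)^2\bigr)}{(v(x)+\epsilon)(v(y)+\epsilon)} \,,
\]
isolates the manifestly nonnegative first term, restricts it to $Z^c\times Z$ to get $\epsilon^{-1}\int_{Z^c} v(x)\,\kappa(x)\,\diff\mu(x)$ with $\kappa(x)=\int_Z \zeta(y)^2 k(x,y)\,\diff\mu(y)$, and shows the other two terms and the potential term are $o(\epsilon^{-1})$. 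Your Step~2 as written is also too schematic to check: the "surviving left-hand side" you exhibit is not what your test function actually produces (your quantity on $Z\times Z^c$ is $\iint\eta(x)^2\min\{v(y),\delta\}\,v(y)\,k(x,y)$, which vanishes as $\delta\to 0$, so to extract something nontrivial you would have to divide by $\delta$ and thereby re-import the unbounded potential term). In short: the idea of a substitution-type test function and the final positivity-of-$k$ argument are right, but the specific substitution must be the reciprocal $1/(v+\epsilon)$ rather than a truncation, and the algebraic identity above is what makes the energy term sign-definite under that substitution.

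Two smaller remarks. First, membership of $\varphi=\zeta^2/(v+\epsilon)$ in $D(\mathcal I)$ is not free: one needs a preliminary lemma (the paper's Lemma~\ref{operations}) that $(v+\epsilon)^{-1}\in D(\mathcal I)$ for $v\geq 0$ and that multiplication by a compactly supported Lipschitz function preserves $D(\mathcal I)$; the second part is where \eqref{bound k} is used. Second, your closing remark correctly identifies where strict positivity of $k$ enters, and your description of the application (how $\tilde{\mathcal E}$, $l$, and $U=V+\tilde V$ arise) matches Lemma~\ref{lemma E on A}; that part is fine.
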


In the proof of Lemma \ref{lemma strong max principle}, we will use this proposition in a setting where in fact $U_+v\in L^1_{\rm loc}(X)$. While this may at first look less natural than the assumption $U_+\in L^1_{\rm loc}(X)$, the difference is crucial in our application, where $U$ depends in a nonlinear fashion on $v$.

We begin with a technical lemma about the form domain $D(\mathcal I)$.

\begin{lemma}\label{operations}
	Let $k$ satisfy \eqref{bound k} and let $w\in D(\mathcal I)$.
	\begin{enumerate}
		\item [(a)] If $w\geq 0$, then $(w+\epsilon)^{-1}\in D(\mathcal I)$ for all $\epsilon>0$.
		\item[(b)] If $w$ is bounded and $\zeta$ is Lipschitz function on $X$ with compact support, then $\zeta w\in D(\mathcal I)$.
	\end{enumerate}
\end{lemma}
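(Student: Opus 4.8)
The plan is to verify both statements directly from the definition of $\mathcal I$, estimating the difference $(\phi(w)(x)-\phi(w)(y))^2$ pointwise against $(w(x)-w(y))^2$ (and, where necessary, against $d(x,y)^2$), so that the required finiteness follows from $w\in D(\mathcal I)$ together with the kernel bound \eqref{bound k}.

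\emph{Part (a).} Here I would use that the map $t\mapsto (t+\epsilon)^{-1}$ is Lipschitz on $[0,\infty)$ with constant $\epsilon^{-2}$. Hence for $w\geq 0$,
$$
\left| (w(x)+\epsilon)^{-1} - (w(y)+\epsilon)^{-1} \right| = \frac{|w(x)-w(y)|}{(w(x)+\epsilon)(w(y)+\epsilon)} \leq \epsilon^{-2} |w(x)-w(y)| \,,
$$
so that $\mathcal I[(w+\epsilon)^{-1}] \leq \epsilon^{-4}\, \mathcal I[w] < \infty$, and $(w+\epsilon)^{-1}\in D(\mathcal I)$. (One also notes $(w+\epsilon)^{-1}$ is measurable as a composition of measurable maps.)

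\emph{Part (b).} Let $M := \|w\|_\infty$, let $L$ be the Lipschitz constant of $\zeta$, and let $K := \supp\zeta$, which is compact. Writing
$$
\zeta(x)w(x) - \zeta(y)w(y) = \zeta(x)\bigl(w(x)-w(y)\bigr) + \bigl(\zeta(x)-\zeta(y)\bigr)w(y) \,,
$$
and using $(a+b)^2\leq 2a^2+2b^2$, we get
$$
\bigl(\zeta(x)w(x)-\zeta(y)w(y)\bigr)^2 \leq 2\|\zeta\|_\infty^2\,(w(x)-w(y))^2 + 2 M^2\,(\zeta(x)-\zeta(y))^2 \,.
$$
The first term integrates against $k$ to at most $4\|\zeta\|_\infty^2\,\mathcal I[w]<\infty$. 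For the second term, note that $(\zeta(x)-\zeta(y))^2$ vanishes unless $x\in K$ or $y\in K$; by symmetry of $k$ it suffices to bound the integral over $\{x\in K\}$, where $(\zeta(x)-\zeta(y))^2\leq L^2 d(x,y)^2$, so this contributes at most $2M^2 L^2 \iint_{K\times X} k(x,y)\,d(x,y)^2\,\diff\mu(x)\diff\mu(y)$, which is finite by \eqref{bound k}. Hence $\mathcal I[\zeta w]<\infty$ and, being measurable, $\zeta w\in D(\mathcal I)$.

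\emph{Main obstacle.} There is no serious obstacle; the only point requiring a little care is the bookkeeping in part (b): one must split the domain of integration according to which of $x,y$ lies in the compact set $\supp\zeta$ and invoke \eqref{bound k} precisely there, rather than attempting a global bound on $(\zeta(x)-\zeta(y))^2$, which need not be integrable against $k$ over all of $X\times X$.
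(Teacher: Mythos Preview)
Your proof is correct and follows essentially the same approach as the paper: part~(a) is identical, and in part~(b) both arguments use the same product-rule decomposition together with \eqref{bound k} on $K\times X$ to control the $\zeta$-difference term. The only cosmetic difference is that the paper first records $\mathcal I[\zeta]<\infty$ as a separate observation and then bounds $\mathcal I[\zeta w]$ via the triangle inequality (producing a Cauchy--Schwarz cross term), whereas you use $(a+b)^2\leq 2a^2+2b^2$ and estimate the $\zeta$-term directly.
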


\begin{proof}[Proof of Lemma \ref{operations}]
	To prove (a), we write
$$
\left( \frac{1}{w(x)+\epsilon} - \frac{1}{w(y)+\epsilon} \right)^2 = \frac{(w(x)-w(y))^2}{(w(x)+\epsilon)^2(w(y)+\epsilon)^2} \leq \frac{1}{\epsilon^4} (w(x)-w(y))^2 \,.
$$
Thus,
$$
\mathcal I[(w+\epsilon)^{-1}] \leq \epsilon^{-4} \ \mathcal I[w] \,.
$$

To prove (b), we first note that $\mathcal I[\zeta]<\infty$. Indeed, if $K:=\supp\zeta$ and $L$ is the Lipschitz constant of $\zeta$, then
\begin{align*}
\mathcal I[\zeta] & \leq \int_K \int_X \left(\zeta(x)-\zeta(y)\right)^2 k(x,y)\diff\mu(y)\diff\mu(x) \\
& \leq L^2 \int_K \int_X d(x,y)^2 k(x,y)\diff\mu(y)\diff\mu(x) <\infty
\end{align*}
by \eqref{bound k}. Now we bound
$$
\left| \zeta(x)w(x)-\zeta(y)w(y) \right| \leq \|\zeta\|_\infty \left|w(x)-w(y)\right| + \|w\|_\infty \left|\zeta(x)-\zeta(y)\right| 
$$
and obtain
$$
\mathcal I[\zeta w] \leq \|\zeta\|_\infty^2\, \mathcal I[w] + \|w\|_\infty^2\, \mathcal I[\zeta] + 2 \|\zeta\|_\infty \|w\|_\infty \sqrt{I[w]\,\mathcal I[\zeta]} \,.
$$
This proves the lemma.
\end{proof}

\begin{proof}
[Proof of Proposition \ref{lemma strong max general}]
Let $\zeta$ be a Lipschitz function on $X$ with compact support. By the first part of Lemma \ref{operations}, the function $(v+\epsilon)^{-1}$ belongs to $D(\mathcal I)$ and is bounded, so by the second part, with $\zeta$ replaced by $\zeta^2$, the function $\varphi = \zeta^2/(v+\epsilon)$ belongs to $D(\mathcal I)$.

We write
\begin{align*}
& \left(\varphi(x)-\varphi(y)\right) \left( v(x)-v(y)\right) \\
& = \frac{-(\zeta(x)v(y)-\zeta(y)v(x))^2 + v(x)v(y)(\zeta(x)-\zeta(y))^2 + \epsilon(v(x)-v(y))(\zeta(x)^2-\zeta(y)^2)}{(v(x)+\epsilon)(v(y)+\epsilon)}.
\end{align*}
Using also
$$
\int_X \varphi U v \leq \int_X U_+ \zeta^2 \frac{v}{v+\epsilon} \,,
$$
we obtain from \eqref{mp general ineq} that
\begin{align}
\label{eq:mpgenproof}
& \frac12 \iint_{X\times X} \frac{(\zeta(x)v(y)-\zeta(y)v(x))^2}{(v(x)+\epsilon)(v(y)+\epsilon)} k(x,y)\diff\mu(x)\diff\mu(y)
 \leq \sum_{k=1}^3 I_k(\epsilon)
\end{align}
with
\begin{align*}
I_1(\epsilon) & = \int_X U_+ \zeta^2 \frac{v}{v+\epsilon} \,,\\
I_2(\epsilon) & = \frac12 \iint_{X\times X} \frac{v(x)v(y)(\zeta(x)-\zeta(y))^2}{(v(x)+\epsilon)(v(y)+\epsilon)} k(x,y)\diff\mu(x)\diff\mu(y) \,,\\
I_3(\epsilon) & = \frac12 \iint_{X\times X} \frac{\epsilon(v(x)-v(y))(\zeta(x)^2-\zeta(y)^2)}{(v(x)+\epsilon)(v(y)+\epsilon)} k(x,y)\diff\mu(x)\diff\mu(y) \,.
\end{align*}

We bound the left side of \eqref{eq:mpgenproof} from below. Setting $Z:=\{v=0\}$, we have
\begin{align*}
& \frac12 \iint_{X\times X} \frac{(\zeta(x)v(y)-\zeta(y)v(x))^2}{(v(x)+\epsilon)(v(y)+\epsilon)} k(x,y)\diff\mu(x)\diff\mu(y) \\
& \geq \iint_{Z^c\times Z} \frac{(\zeta(x)v(y)-\zeta(y)v(x))^2}{(v(x)+\epsilon)(v(y)+\epsilon)} k(x,y)\diff\mu(x)\diff\mu(y) \\
& = \epsilon^{-1} \int_{Z^c} \frac{v(x)^2}{v(x)+\epsilon} \kappa(x)\diff\mu(x)
\end{align*}
with
$$
\kappa(x) := \int_{Z} \zeta(y)^2 k(x,y)\diff\mu(y) \,.
$$

By dominated convergence, we have
$$
\lim_{\epsilon\to 0} \int_{Z^c} \frac{v(x)^2}{v(x)+\epsilon} \kappa(x)\diff\mu(x) = \int_{Z^c} v(x) \kappa(x)\diff\mu(x) \,,
$$
and therefore, by \eqref{eq:mpgenproof},
\begin{equation}
\label{eq:mpgenproof2}
\int_{Z^c} v(x) \kappa(x)\diff\mu(x) \leq  \liminf_{\epsilon\to 0} \sum_{k=1}^3  \epsilon I_k(\epsilon) \,.
\end{equation}

Let us show that $\lim_{\epsilon\to 0} \epsilon I_k(\epsilon) =0$ for $k=1,2,3$. We write the integrand of $\epsilon I_1(\epsilon)$ as
$$
U_+ \zeta^2 \min\{v,1\} \left( \frac{\epsilon}{v+\epsilon} \1_{\{0<v<1\}} + \epsilon \frac{v}{v+\epsilon} \1_{\{v\geq 1\}} \right).
$$
By assumption, the product in front of the parentheses is integrable. The factor in parentheses is $\leq 1$ if $\epsilon\leq1$ and tends to zero pointwise. Therefore, by dominated convergence, $\epsilon I_1(\epsilon)\to 0$. Moreover, we can simply bound $I_2(\epsilon) \leq \mathcal I[\zeta]$, which is finite as shown in the proof of Lemma \ref{operations}. Thus, $\epsilon I_2(\epsilon)\to 0$. Finally, the integrand of $\epsilon I_3(\epsilon)$ is bounded, in absolute value, by 
$$
2\|\zeta\|_\infty |v(x)-v(y)| |\zeta(x)-\zeta(y)| k(x,y) \,,
$$
which is integrable. Moreover, this integrand tends pointwise to zero. Thus, by dominated convergence, $\epsilon I_3(\epsilon)\to 0$.

Returning to \eqref{eq:mpgenproof2}, we infer that
\begin{equation}
\label{eq:mpgenproof3}
\int_{Z^c} v(x) \kappa(x)\diff\mu(x) = 0 \,.
\end{equation}

Assume now that $Z$ has positive measure. Then we can choose the function $\zeta$ in such a way that $\zeta^2\1_Z$ is not identically zero. Then, since $k>0$ on $X\times X$, we have $\kappa>0$ on $X$. Thus, by \eqref{eq:mpgenproof3}, $|Z^c|=0$, that is, $v\equiv 0$. This completes the proof.
\end{proof}

\begin{remark}
	There is also a global version of Proposition \ref{lemma strong max general}. Namely, the same conclusion holds without an underlying metric and without assumption \eqref{bound k}, provided one has the global integrability $U_+\min\{v, 1\}\in L^1(X)$ and the compact support condition in \eqref{mp general ineq} is dropped. This follows by the same proof with $\zeta\equiv 1$.	
\end{remark}


\subsection{Proof of Lemma \ref{lemma strong max principle}}
\label{subsection proof of strong max}

It remains to reduce Lemma \ref{lemma strong max principle} to the general maximum principle from Proposition \ref{lemma strong max general} from the previous subsection. To do so, we use antisymmetry of $w$ to express the quadratic form $\mathcal E[\varphi,w]$ as a double integral over the region $\Slxi$, plus a multiplicative term. We drop in the following the subscript from $\Slxi$ and $\Phi_\lxi$ to ease notation. Moreover, we set
\[l(\xi, \eta) := \frac{1}{|\xi - \eta|^n}- \frac{J^{1/2}_\Phi(\eta)}{|\xi- \Phi(\eta)|^n}. \]
Notice that $l(\xi, \eta) > 0$ for every $\xi, \eta \in \Sigma$ by Lemma \ref{lemma difference is negative}. Moreover, by \eqref{eq:confdistance}, we have $l(\xi,\eta)=l(\eta,\xi)$ for all $\xi,\eta\in\Sigma$. For functions $u,v$ on $\Sigma$, we define the quadratic form
\begin{equation}
\label{E tilde} \tilde{\mathcal E}[u,v] := \frac12 \iint_{\Sigma \times \Sigma} l(\xi, \eta) (u(\xi) - u(\eta)) (v(\xi)-v(\eta)) \diff \xi \diff \eta
\end{equation}
on the domain 
\[ \tilde{\mathcal D} := \{ u \in L^2(\Sigma) \, : \, \tilde{\mathcal{E}}[u,u] < \infty \}. \]

\begin{lemma}
\label{lemma E on A}
Let $w \in \mathcal D$ be antisymmetric with respect to $\Phi$ and let $\varphi \in \mathcal D$ with $\varphi=0$ on $\Sigma^c$. Then
\[ \mathcal E[\varphi,w] = \tilde{\mathcal E}[ \varphi,w|_\Sigma] + \int_\Sigma \varphi(\xi) \tilde{V}(\xi) w(\xi) \diff \xi, \]
with
\[ \tilde{V}(\xi) = \int_\Sigma  \frac{J_\Phi(\eta)^{1/2}}{|\xi- \Phi(\eta)|^n}(1+ J_\Phi(\eta)^{1/2}) \diff \eta. \]
\end{lemma}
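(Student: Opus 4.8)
The approach is to unfold the definition of $\mathcal E[\varphi,w]$ as a double integral over $\Sph\times\Sph$, split it according to whether each of $\xi,\eta$ lies in $\Sigma$ or in $\Sigma^c$, use that $\varphi$ vanishes on $\Sigma^c$ to discard and simplify most pieces, and then fold the remaining ``mixed'' integral back onto $\Sigma\times\Sigma$ by the change of variables $\eta\mapsto\Phi(\eta)$, invoking the antisymmetry relation $w(\Phi(\eta))=-J_\Phi(\eta)^{-1/2}w(\eta)$. Comparing the outcome with $\tilde{\mathcal E}[\varphi,w|_\Sigma]$, the auxiliary kernel $m(\xi,\eta):=J_\Phi(\eta)^{1/2}/|\xi-\Phi(\eta)|^n$ — for which $l(\xi,\eta)=|\xi-\eta|^{-n}-m(\xi,\eta)$ — appears both from the folded mixed integral and from $\tilde{\mathcal E}$, and the ensuing cancellations produce precisely the potential $\tilde V$.

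In more detail: since $0<l\le|\cdot|^{-n}$ on $\Sigma\times\Sigma$ (by Lemma \ref{lemma difference is negative} and $J_\Phi\ge0$) and $\varphi,w\in\mathcal D$, the elementary inequality $|ab|\le\frac12(a^2+b^2)$ shows that all double integrals below converge absolutely — in particular $\tilde{\mathcal E}[\varphi,w|_\Sigma]$, so that $\varphi|_\Sigma,w|_\Sigma\in\tilde{\mathcal D}$ — and hence the splittings and rearrangements are legitimate. Because $\varphi=0$ on $\Sigma^c$, the $\Sigma^c\times\Sigma^c$ part of $\mathcal E[\varphi,w]$ vanishes and the $\Sigma\times\Sigma^c$ and $\Sigma^c\times\Sigma$ parts agree after relabelling, so
\[
\mathcal E[\varphi,w]=\frac12\iint_{\Sigma\times\Sigma}\frac{(\varphi(\xi)-\varphi(\eta))(w(\xi)-w(\eta))}{|\xi-\eta|^n}\,\diff\xi\,\diff\eta+\iint_{\Sigma\times\Sigma^c}\frac{\varphi(\xi)\,(w(\xi)-w(\eta))}{|\xi-\eta|^n}\,\diff\xi\,\diff\eta .
\]
In the second integral I substitute $\eta=\Phi(\eta')$, $\diff\eta=J_\Phi(\eta')\,\diff\eta'$; this is a bona fide change of variables because $\Phi$ is a conformal involution with Jacobian $J_\Phi$ bounded above and below on $\Sph$ which interchanges $\Sigma$ and $\Sigma^c$ up to a null set. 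Using $w\circ\Phi=-J_\Phi^{-1/2}w$, the mixed integral turns into
\[
\iint_{\Sigma\times\Sigma}\varphi(\xi)w(\xi)\,J_\Phi(\eta)^{1/2}m(\xi,\eta)\,\diff\xi\,\diff\eta+\iint_{\Sigma\times\Sigma}\varphi(\xi)w(\eta)\,m(\xi,\eta)\,\diff\xi\,\diff\eta .
\]
Since $l=|\cdot|^{-n}-m$, the difference $\mathcal E[\varphi,w]-\tilde{\mathcal E}[\varphi,w|_\Sigma]$ equals this folded mixed integral plus $\tfrac12\iint_{\Sigma\times\Sigma}m(\xi,\eta)(\varphi(\xi)-\varphi(\eta))(w(\xi)-w(\eta))\,\diff\xi\,\diff\eta$. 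Expanding the latter and symmetrising in $\xi,\eta$ — using that $m$ is symmetric, which is exactly the identity $l(\xi,\eta)=l(\eta,\xi)$ noted before the lemma — all its terms carry $\varphi$ only at the point $\xi$ and collapse to $\iint m(\xi,\eta)\varphi(\xi)w(\xi)-\iint m(\xi,\eta)\varphi(\xi)w(\eta)$. The $\varphi(\xi)w(\eta)$ piece cancels the second term of the folded mixed integral, while the $\varphi(\xi)w(\xi)$ pieces combine to $\iint_{\Sigma\times\Sigma}\varphi(\xi)w(\xi)\big(1+J_\Phi(\eta)^{1/2}\big)m(\xi,\eta)\,\diff\xi\,\diff\eta=\int_\Sigma\varphi(\xi)\,\tilde V(\xi)\,w(\xi)\,\diff\xi$, which is the asserted identity.

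I do not anticipate a genuine conceptual obstacle; the entire difficulty is the careful bookkeeping of signs and of powers of $J_\Phi$ through the change of variables. The crucial point is that antisymmetry attaches $J_\Phi^{-1/2}$ to the $w(\eta')$ term, which combined with the change-of-variables Jacobian $J_\Phi$ produces $J_\Phi^{1/2}$, i.e. exactly the kernel $m$, so that this cross term matches — with the correct sign — the one coming from $\tilde{\mathcal E}$; whereas the $w(\xi)$ term keeps the full $J_\Phi$, and the resulting $J_\Phi^{1/2}$ together with the ``$1$'' supplied by the $m$-contribution of $l$ assembles the factor $1+J_\Phi^{1/2}$ inside $\tilde V$. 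The only analytic caution is to stay inside absolutely convergent integrals throughout; if one prefers to handle the individual monomial integrals separately, it is cleanest to establish the identity first for $\varphi$ supported in a compact subset of $\Sigma$ — on which $\int_\Sigma m(\xi,\eta)\,\diff\eta$ is bounded — and then pass to general $\varphi$ by density, using the domination $0<l\le|\cdot|^{-n}$.
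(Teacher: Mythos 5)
Your proof is correct and takes essentially the same route as the paper: split $\mathcal E[\varphi,w]$ over $\Sigma\times\Sigma$ and $\Sigma\times\Sigma^c$, fold the mixed piece back onto $\Sigma\times\Sigma$ via the change of variables $\eta\mapsto\Phi(\eta)$ together with the antisymmetry $w\circ\Phi=-J_\Phi^{-1/2}w$, and then symmetrize using $m(\xi,\eta)=m(\eta,\xi)$ to isolate the $\tilde{\mathcal E}$-term and assemble the diagonal contribution $J_\Phi^{1/2}(1+J_\Phi^{1/2})$ into $\tilde V$. The paper organizes the bookkeeping by treating the $w(\xi)$ and $w(\eta)$ pieces of the mixed integral separately rather than subtracting $\tilde{\mathcal E}$ first, but the two computations are arithmetically identical, and your closing remark about first arguing for compactly supported $\varphi$ is a sensible precaution that matches how the lemma is actually invoked in the proof of Lemma \ref{lemma strong max principle}.
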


\begin{proof}
We write
\begin{align}
\label{eq:eonaproof}
\mathcal E[w,\varphi] &= \frac12 \iint_{\Sigma \times \Sigma} \frac{(\varphi(\xi) - \varphi(\eta)) (w(\xi)-w(\eta))}{|\xi - \eta|^{n}} \diff \xi \diff \eta 
+ \iint_{\Sigma \times \Sigma^c} \frac{\varphi(\xi) (w(\xi)-w(\eta))}{|\xi - \eta|^{n}} \diff \xi \diff \eta \,.
\end{align}
The second integral on the right side is a sum of two terms, corresponding to $w(\xi)$ and $w(\eta)$, respectively. Since
$$
\int_{\Sigma^c} \frac{\diff\eta}{|\xi - \eta|^{n}} = \int_{\Sigma} \frac{J_\Phi(\eta)\diff\eta}{|\xi - \Phi(\eta)|^{n}} \,,
$$
the first term becomes
$$
\iint_{\Sigma \times \Sigma^c} \frac{\varphi(\xi) w(\xi)}{|\xi - \eta|^{n}} \diff \xi \diff \eta
= \int_\Sigma \varphi(\xi)w(\xi) \left( \int_{\Sigma} \frac{J_\Phi(\eta)\diff\eta}{|\xi - \Phi(\eta)|^{n}} \right)\diff\xi \,.
$$
This is one contribution of the $\tilde V$ term.

Let us discuss the second contribution coming from the second integral on the right side of \eqref{eq:eonaproof}. By antisymmetry and a change of variables, we have
$$
\int_{\Sigma^c} \frac{w(\eta)}{|\xi-\eta|^n}\diff\eta = - \int_\Sigma
\frac{J_\Phi(\eta)^{1/2}}{|\xi- \Phi(\eta)|^n} w(\eta) \diff \eta \,,
$$
and therefore, by symmetry,
\begin{align*}
\iint_{\Sigma \times \Sigma^c} \frac{\varphi(\xi) w(\eta)}{|\xi - \eta|^{n}} \diff \xi \diff \eta
& = - \frac12 \iint_{\Sigma \times \Sigma} \frac{J_\Phi(\eta)^{1/2}}{|\xi- \Phi(\eta)|^n} \left( \varphi(\xi) w(\eta)+\varphi(\eta)w(\xi) \right) \diff \xi \diff \eta \\
& = \frac12 \iint_{\Sigma \times \Sigma} \frac{J_\Phi(\eta)^{1/2}}{|\xi- \Phi(\eta)|^n} (\varphi(\xi)-\varphi(\eta))(w(\xi)-w(\eta)) \diff \xi \diff \eta \\
& \quad - \frac12 \iint_{\Sigma \times \Sigma} \frac{J_\Phi(\eta)^{1/2}}{|\xi- \Phi(\eta)|^n} \left( \varphi(\xi) w(\xi) + \varphi(\eta) w(\eta) \right) \diff \xi \diff \eta \,.
\end{align*}
In this expression, the first double integral combines with the first double integral on the right side of \eqref{eq:eonaproof} to give the term $\tilde{\mathcal E}[ \varphi,w|_\Sigma]$ in the lemma. Moreover, by symmetry the second double integral equals
$$
\frac12 \iint_{\Sigma \times \Sigma} \frac{J_\Phi(\eta)^{1/2}}{|\xi- \Phi(\eta)|^n} \left( \varphi(\xi) w(\xi) + \varphi(\eta) w(\eta) \right) \diff \xi \diff \eta
= \int_\Sigma \varphi(\xi)w(\xi) \left( \int_{\Sigma} \frac{J_\Phi(\eta)^{1/2}\diff\eta}{|\xi - \Phi(\eta)|^{n}} \right)\diff\xi \,,
$$
which is the remaining contribution to the $\tilde V$ term. Collecting all terms we arrive at the formula in the lemma.
\end{proof}

\begin{proof}
[Proof of Lemma \ref{lemma strong max principle}]
We are going to apply Lemma \ref{lemma strong max general} with $\mathcal I = \tilde{\mathcal E}$, $k = l$, $X = \Slxi$ and $U = V + \tilde{V}$. Let us check that the assumptions of Lemma \ref{lemma strong max general} are satisfied.

By Lemma \ref{lemma E on A}, we have, for any $0\leq\varphi \in \mathcal D$ with $\varphi=0$ on $(\Slxi)^c$,
\begin{equation}
\label{ineq tilde E}
\tilde{\mathcal E}[ \varphi,w] + \int_\Slxi \varphi(\xi) U(\xi) w(\xi) \diff \xi \geq 0 \,.
\end{equation}
Next, we observe that for any compact subset $C \subset \Slxi$, there is $M > 0$ such that we have the uniform bound
\begin{equation}
\label{unif bd}
|\xi- \Phi(\eta)|^{-n}  \leq M \quad \text{ for } \quad \xi \in C, \eta \in \Slxi,
\end{equation}
This has two consequences. First, if $\varphi$ is compactly supported on $\Slxi$, it is easy to deduce from \eqref{unif bd} that $\varphi \in \mathcal D$ if and only if $\varphi \in \tilde{\mathcal D}$. Therefore, \eqref{ineq tilde E} holds for all compactly supported $\varphi \in \tilde{\mathcal D}$.

Second, it follows from \eqref{unif bd} that $\tilde{V}$ is bounded on $C$ and hence $\tilde V \min\{v,1\}\in L^1(C)$. Since, moreover, $V_+ \min\{w,1\} \in L^1(C)$ by assumption, we have $U_+ \min\{w,1\} \in L^1(C)$ for every compact $C \subset \Slxi$.

Thus, all the assumptions of Lemma \ref{lemma strong max general} are satisfied and we conclude by that lemma.
\end{proof}


\section{Symmetry by the method of moving spheres}
\label{section moving spheres}

In this section, we prove a symmetry result for solutions of \eqref{equation 1}. We will deduce this by the method of moving spheres using the preliminaries introduced so far, in particular the maximum principles from Section \ref{section max principles}.

The method of moving spheres is well-established on $\R^n$ and consists in comparing the values of a solution to some equation with its (suitably defined) inversion about a certain sphere $\partial B_\lambda(x_0) \subset \R^n$. Using stereographic projection, we lift this procedure to $\Sph$. Namely, for any solution to \eqref{equation 1} and $\lambda > 0$, $\xi_0 \in \Sph \setminus \{S\}$, we will compare $u$ on the set
$\Slxi = \Ste (B_\lambda(\Ste^{-1}(\xi_0)))$ with its reflected version $u_{\Phi_\lxi}$. Recall that the map $\Phi_\lxi$ has been introduced in \eqref{definition Phi lambda xi} and the definition of $u_\Phi$ has been given in \eqref{definition u Phi}.

At the same time we need to consider the reflection of $u$ about (stereographically projected) planes, i.e., $u_{\Psi_{\alpha,e}}$ for $e\in\mathbb S^{n-1}$, $\alpha\in\R$, with $\Psi_{\alpha,e}$ defined in \eqref{definition Psi a e}.

The following is the main result of this section.

\begin{theorem}
\label{theorem symmetry}
Let $u \geq 0$ be a weak solution to \eqref{equation 1}. Then the following holds.
\begin{enumerate}
\item[(i)]
For every $\xi_0 \in \Sph\setminus\{S\}$, there is a $\lambda_0 = \lambda_0(\xi_0) > 0$ such that $u_{\Phi_{\lambda_0,\xi_0}} \equiv u$.
\item[(ii)]
For every $e \in \mathbb S^{n-1}$, there is $a = a(e) \in \R$ such that $u_{\Psi_{\alpha,e}} \equiv u$.
\end{enumerate}
\end{theorem}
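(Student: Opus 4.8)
The plan is to run the method of moving spheres on $\Sph$, using the two maximum principles from Section~\ref{section max principles} as the engine. I will only describe part~(i); part~(ii) is entirely analogous with $\Phi_{\lambda,\xi_0}$, $\Slxi$ replaced by $\Psi_{\alpha,e}$, $\Ste(H_{\alpha,e})$, and the moving parameter $\lambda\to\infty$ replaced by $\alpha\to-\infty$ (or $+\infty$). Fix $\xi_0\in\Sph\setminus\{S\}$ and set $w_\lambda := u_{\Phi_{\lambda,\xi_0}} - u$ on $\Slxi$. By Lemma~\ref{lemma conf inv}, $u_{\Phi_{\lambda,\xi_0}}$ is again a weak solution of \eqref{equation 1}, so subtracting the weak formulations for $u_{\Phi_{\lambda,\xi_0}}$ and $u$ and testing against $\varphi\in\mathcal D$ supported in $\Slxi$, one gets $\mathcal E[\varphi,w_\lambda] = C_n\int_\Slxi \varphi\,(u_{\Phi_{\lambda,\xi_0}}\ln u_{\Phi_{\lambda,\xi_0}} - u\ln u)$. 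Writing $a\ln a - b\ln b = V_\lambda(a-b)$ with $V_\lambda := (u_{\Phi_{\lambda,\xi_0}}\ln u_{\Phi_{\lambda,\xi_0}} - u\ln u)/(u_{\Phi_{\lambda,\xi_0}} - u)$ (and $V_\lambda := 1+\ln u$ where $u_{\Phi_{\lambda,\xi_0}}=u$), this becomes $\mathcal E[\varphi,w_\lambda] = C_n\int_\Slxi \varphi\,V_\lambda\, w_\lambda$, i.e. $\mathcal E[\varphi,w_\lambda] + \int_\Slxi \varphi\,(-C_n V_\lambda)\,w_\lambda = 0$, which is precisely the hypothesis \eqref{assumptions on w narrow region mp 1}/\eqref{assumptions on w strong mp 1} with potential $V = -C_n V_\lambda$. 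Note also that $w_\lambda$ is antisymmetric with respect to $\Slxi$ since $(u_\Phi)_\Phi = u$ for the involution $\Phi = \Phi_{\lambda,\xi_0}$, so $u$ and $u_\Phi$ switch roles under the pullback. The nonnegativity of $w_\lambda$ on $\Slxi\setminus\Slxi = \emptyset$ is vacuous, so \eqref{assumptions on w narrow region mp 2}/\eqref{assumptions on w strong mp 2} will be used in the form that $w_\lambda\geq 0$ on all of $\Slxi$ once we know it.

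\emph{Start of moving: small $\lambda$.} As $\lambda\to 0$, the cap $\Slxi$ shrinks to the point $\xi_0$, so $|\Slxi|\to 0$. To apply Lemma~\ref{lemma max principle narrow region} I need $\int_{\Slxi} e^{2(V)_-/C_n}<|\Sph|$, i.e. $\int_{\Slxi} e^{2(V_\lambda)_+}<|\Sph|$. The danger is that $V_\lambda$ is large where $u_{\Phi_{\lambda,\xi_0}}$ or $u$ is large; but near $\xi_0$ the Jacobian factor $J_{\Phi_{\lambda,\xi_0}}^{1/2}$ in $u_{\Phi_{\lambda,\xi_0}} = J_{\Phi_{\lambda,\xi_0}}^{1/2}\,u\circ\Phi_{\lambda,\xi_0}$ is small (the inversion contracts near the center), so one needs a careful estimate showing $e^{2(V_\lambda)_+}$ is integrable with small integral on the small cap; one uses $a\ln a - b\ln b \le (a-b)(1+\ln\max\{a,b\})$ and the fact that $u\in L^2$ with $u\ln u\in L^1$ (from the weak formulation with $\varphi\equiv 1$) to control the contribution of the set where $u$ is large, and elementary bounds where $u$ is bounded. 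This gives $\lambda^* := \sup\{\lambda>0 : w_\mu\ge 0 \text{ on }\Slxi[\mu]\text{ for all }\mu\le\lambda\} > 0$.

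\emph{Continuation and the dichotomy.} Let $\lambda_0 := \lambda^*$. If $\lambda_0 = \infty$, a limiting/symmetry argument shows $u$ is actually invariant under a whole one-parameter family and one deduces the conclusion (in fact in this degenerate situation one argues directly, or notes it forces $u$ constant, which is of the asserted form with $\lambda_0$ finite after relabeling — I would phrase (i) so that some finite $\lambda_0$ works, handling the $\lambda^*=\infty$ case by a separate short argument). Assume $\lambda_0<\infty$. By continuity of $\lambda\mapsto w_\lambda$ in $\mathcal D$ (which follows from continuity of $\lambda\mapsto u_{\Phi_{\lambda,\xi_0}}$, itself a consequence of the conformal invariance lemma and dominated convergence on the explicit form of $J_{\Phi_{\lambda,\xi_0}}$) we get $w_{\lambda_0}\ge 0$ on $\Slxi[\lambda_0]$. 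Now apply the strong maximum principle, Lemma~\ref{lemma strong max principle}, with $V = -C_n V_{\lambda_0}$: one must check $V_+\min\{w_{\lambda_0},1\}\in L^1_{\rm loc}$, which holds because $V_+ = C_n(V_{\lambda_0})_-$ and $(V_{\lambda_0})_- \le C_n(1+(\ln u)_-)$ while $(\ln u)_-\,\min\{w_{\lambda_0},1\}$ is locally integrable (here $(\ln u)_-$ is bounded where $u\ge 1$ and the product with $w_{\lambda_0}\le w_{\lambda_0}$, together with $u_{\Phi_{\lambda_0,\xi_0}}\in L^1_{\rm loc}$, does the job; alternatively one verifies the stronger $U_+w_{\lambda_0}\in L^1_{\rm loc}$ as the remark after Proposition~\ref{lemma strong max general} permits). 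The dichotomy then says either $w_{\lambda_0}\equiv 0$ on $\Sph$ — which is the desired conclusion — or $w_{\lambda_0}>0$ a.e. on $\Slxi[\lambda_0]$. In the latter case I derive a contradiction with maximality of $\lambda_0$: using that $w_{\lambda_0}$ is strictly positive and bounded below on compact subsets together with a small-volume argument near the boundary $\partial\Slxi[\lambda_0]$ (the part of $\Slxi[\lambda_0+\epsilon]\setminus\Slxi[\lambda_0]$ has small volume), one shows $w_{\lambda_0+\epsilon}\ge 0$ on $\Slxi[\lambda_0+\epsilon]$ for small $\epsilon>0$, contradicting the definition of $\lambda^*$. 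Concretely: split $\Slxi[\lambda_0+\epsilon]$ into $\Omega := \Slxi[\lambda_0+\epsilon]\setminus\Slxi[\lambda_0-\delta]$ (small volume, for $\delta$, $\epsilon$ small) and its complement inside the cap, where $w_{\lambda_0+\epsilon}\ge 0$ by continuity and strict positivity of $w_{\lambda_0}$; then Lemma~\ref{lemma max principle narrow region} applied on $\Omega$ (with the integrability $\int_\Omega e^{2(V_{\lambda_0+\epsilon})_+}<|\Sph|$ again coming from the small-volume estimate, uniform in $\epsilon$ near $\lambda_0$) yields $w_{\lambda_0+\epsilon}\ge 0$ on $\Omega$ too.

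\emph{Main obstacle.} The technical heart is verifying the integrability condition $\int_\Omega e^{2(V_\lambda)_+/1}\,\diff\xi < |\Sph|$ uniformly on the relevant small-volume sets $\Omega$, both at the start ($\lambda$ small) and in the continuation step. This is delicate because $V_\lambda$ involves $\ln u_{\Phi_{\lambda,\xi_0}}$ and $\ln u$, and $u$ is only known to lie in $\mathcal D$ with $u\ln u\in L^1$; exponentiating such a logarithm is borderline. The key points will be: (a) the pointwise bound $V_\lambda\le 1+\ln\max\{u,u_{\Phi_{\lambda,\xi_0}}\}$, so $e^{2(V_\lambda)_+}\le e^2\max\{u,u_{\Phi_{\lambda,\xi_0}}\}^2$ on the set where this max exceeds $1$, reducing the integral to $\int_\Omega(u^2 + u_{\Phi_{\lambda,\xi_0}}^2)$; (b) $u^2\in L^1(\Sph)$, so $\int_\Omega u^2\to 0$ as $|\Omega|\to 0$ by absolute continuity of the integral, and similarly for $u_{\Phi_{\lambda,\xi_0}}^2$, using $\|u_{\Phi_{\lambda,\xi_0}}\|_2 = \|u\|_2$ and equicontinuity of the family of measures $u_{\Phi_{\lambda,\xi_0}}^2\,\diff\xi$ in $\lambda$ ranging over a compact set bounded away from the problematic values; (c) on the complementary set $\{\max\le 1\}$ the integrand is bounded by $e^2$, contributing at most $e^2|\Omega|$. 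Putting these together makes the integral $< |\Sph|$ once $|\Omega|$ is small, which closes both the initialization and the continuation. I expect this equicontinuity-in-$\lambda$ of $u_{\Phi_{\lambda,\xi_0}}^2\,\diff\xi$, which ultimately rests on the explicit formula for $J_{\Phi_{\lambda,\xi_0}}$ from the classification of conformal maps, to be the part requiring the most care.
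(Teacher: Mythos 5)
Your overall architecture is the same as the paper's: set $w_\lambda=u_{\Phi_{\lambda,\xi_0}}-u$, use conformal invariance to get a linear equation with a potential, run the moving spheres argument with the small-volume and strong maximum principles from Section~\ref{section max principles}, and treat part (ii) analogously. The setup and the sliding-scheme are correct. But there is a genuine gap in the way you bound the potential, and your own ``main obstacle'' paragraph points straight at it without resolving it.

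You propose to bound $V_\lambda\le 1+\ln\max\{u,u_{\Phi_\lambda}\}$ and then control $\int_\Omega e^{2(V_\lambda)_+}$ by $\int_\Omega(u^2+u_{\Phi_\lambda}^2)$, relying on equicontinuity in $\lambda$ of the family of measures $u_{\Phi_\lambda}^2\,\diff\xi$. This fails exactly where you need it: by the change of variables $\eta=\Phi_\lambda\xi$ one has $\int_{\Slxi}u_{\Phi_\lambda}^2\,\diff\xi=\int_{\Slxi^c}u^2\,\diff\eta\to\|u\|_2^2$ as $\lambda\to 0$, while $|\Slxi|\to 0$. So the family $u_{\Phi_\lambda}^2\,\diff\xi$ concentrates essentially all of $\|u\|_2^2$ on a shrinking cap and is \emph{not} uniformly integrable near $\lambda=0$; the ``Jacobian contracts near the center'' heuristic you offer is in fact reversed ($J_{\Phi_\lambda}$ blows up near $\xi_0$). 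The same concentration problem reappears near $\lambda_0$, and your attempted fix at the continuation step — that $w_{\lambda_0}$ is ``bounded below on compact subsets'' — is not available, since $w_{\lambda_0}>0$ is only known a.e.\ and nothing guarantees continuity.

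The paper avoids this by a different choice of $\Omega$ together with a convexity observation that eliminates $u_{\Phi_\lambda}$ from the estimate entirely. One applies the small-volume maximum principle not on the whole cap but on $\Omega=\Sigma_{\lambda,\xi_0}^-=\{w_\lambda<0\}$, where $u_{\Phi_\lambda}<u$. Since $g(t)=C_nt\ln t$ is convex and $g'$ is increasing, on this set the difference quotient satisfies $(g(u_{\Phi_\lambda})-g(u))/(u_{\Phi_\lambda}-u)\le g'(u)=C_n(1+\ln u)$ — controlled by the derivative at the \emph{larger} argument $u$. This gives a potential on $\Omega$ depending on $u$ alone, so $\int_\Omega e^{2V_-/C_n}\le e^2\int_{\Sigma_{\lambda,\xi_0}^-}u^2$, which tends to zero by dominated convergence as $\lambda\to 0$ (and, at the continuation step, because $\1_{\{w_\lambda<0\}}\to 0$ a.e.\ on $\Sigma_{\lambda_0}$ by the strong maximum principle). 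On $\Sigma_\lxi\setminus\Omega$ one uses the elementary pointwise bound $h\,w_\lambda\ge -e^{-1}$, which both complements the potential and makes the $L^1_{\rm loc}$ condition for the strong maximum principle immediate ($V_+w_{\lambda_0,\xi_0}\le e^{-1}$), rather than the more involved verification you sketch. Finally, the case $\lambda_0=\infty$ is excluded directly by the mass identity $\int_{\Sigma_\lambda}u^2\le\int_{\Sigma_\lambda^c}u^2$ together with $\1_{\Sigma_\lambda^c}\to 0$ a.e.\ as $\lambda\to\infty$ (forcing $u\equiv 0$), which is simpler and sharper than the ``invariance under a one-parameter family'' heuristic you invoke.
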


As in Section \ref{section max principles}, since the arguments to prove parts (i) and (ii) are very similar and of comparable difficulty, for clarity of exposition we focus in the following on proving part (i) of Theorem \ref{theorem symmetry}. The reader is invited to check that all arguments given in the rest of the present section remain valid when $\Phi_\lxi$ is replaced by $\Psi_{\alpha,e}$ and therefore yield a proof of part (ii) as well.

\subsection{The moving spheres argument}

In this subsection, we fix $\xi_0 \in \Sph \setminus \{S\}$ and let $\lambda > 0$ vary. We abbreviate
\[ u_\lxi := u_{\Phi_\lxi}. \]
We will prove Theorem \ref{theorem symmetry} by analyzing the positivity of the difference
\[ w_\lxi := u_\lxi - u  \]
on $\Slxi$. Since $\Phi_\lxi^2 = \text{id}_{\Sph \setminus \{\xi_0, S\}}$, the function $w_\lxi$ is antisymmetric with respect to $\Phi_\lxi$. By the conformal invariance proved in Lemma \ref{lemma conf inv}, both $u$ and $u_\lxi$ are weak solutions of \eqref{equation 1} and therefore the function $w_\lxi$ satisfies
\begin{equation}
\label{equality for w}
\mathcal E[\varphi,w_\lxi] = \int_{\Sph} \varphi(\xi) h(\xi) w_\lxi (\xi) \diff \xi
\qquad\text{for all}\ \varphi \in \mathcal D
\end{equation}
with
$$
h(\xi) := \begin{cases} 
\frac{g(u_\lxi(\xi))- g(u(\xi))}{u_\lxi(\xi)-u(\xi)} & \text{if}\ u_\lxi(\xi)\neq u(\xi) \,,\\
g'(u(\xi)) & \text{if}\ u_\lxi(\xi)= u(\xi) \,,
\end{cases}
\qquad\text{and}\qquad
g(u) := C_n u \ln u \,.
$$

Convexity of $g$ implies that
$$
h(\xi)w_\lxi (\xi) \geq g'(u(\xi)) w_\lxi (\xi)
\qquad\text{if}\ u_\lxi(\xi)\leq u(\xi)
$$
and a simple computation shows that
$$
h(\xi)w_\lxi (\xi) \geq - e^{-1}
\qquad\text{if}\ u_\lxi(\xi)\geq u(\xi) \,.
$$
Thus, setting
\[ \Sigma_\lxi^- := \{ \eta \in \Slxi \, : \, w_\lxi(\eta) <  0 \} ,\]
and
$$
V(\xi) := -g'(u(\xi))\ \1_{\Sigma_\lxi^-}(\xi) + (e w_\lxi(\xi))^{-1} \1_{\Sigma_\lxi\setminus\Sigma_\lxi^-}(\xi) \,,
$$
we have $h w_\lxi \geq - V w_\lxi$ on $\Sigma_\lxi$ and, consequently,
\begin{equation}
\label{inequality for w}
\mathcal E[\varphi,w_\lxi] + \langle \varphi, V w_\lxi \rangle \geq 0 \qquad \text{ for all } 0\leq\varphi\in\mathcal D \text{ with } \varphi = 0 \text{ on } (\Sigma_\lxi)^c \,.
\end{equation}

The first step in the method of moving spheres is the following application of the small volume maximum principle from Lemma \ref{lemma max principle narrow region}.

\begin{lemma}
[Starting the sphere]
\label{lemma starting the sphere}
Let $\xi_0 \in \Sph \setminus \{S \}$ be fixed. Then for every $\lambda > 0$ small enough, we have $w_\lxi \geq 0$ a.e.~on $\Slxi$.
\end{lemma}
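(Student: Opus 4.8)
The plan is to apply the small volume maximum principle (Lemma~\ref{lemma max principle narrow region}) with $w = w_\lxi$, $\Omega = \Slxi$ and the potential $V$ constructed just above in \eqref{inequality for w}. Inequality \eqref{inequality for w} gives exactly the hypothesis \eqref{assumptions on w narrow region mp 1}, and since here $\Omega = \Slxi$ the hypothesis \eqref{assumptions on w narrow region mp 2} is vacuous. Thus the only thing to verify is the smallness condition
\[
\int_{\Slxi} e^{2V_-/C_n}\,\diff\eta < |\Sph| \,,
\]
for $\lambda$ small. The key point is that $\Slxi = \Ste(B_\lambda(x_0))$ shrinks to the single point $\xi_0$ as $\lambda \to 0$, so $|\Slxi| \to 0$; hence it suffices to show that $e^{2V_-/C_n}$ is uniformly integrable over these shrinking sets, i.e. that $\int_{\Slxi} e^{2V_-/C_n}$ tends to $0$.

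First I would analyze $V_-$. On $\Slxi \setminus \Sigma_\lxi^-$ we have $V = (e w_\lxi)^{-1} \geq 0$, so $V_- = 0$ there. On $\Sigma_\lxi^-$ we have $V = -g'(u) = -C_n(\ln u + 1)$, so $V_- = (C_n(\ln u + 1))_+ = C_n(\ln u + 1)\1_{\{u > e^{-1}\}} \le C_n (\ln u)_+ + C_n$ pointwise. Hence on all of $\Slxi$,
\[
e^{2V_-/C_n} \le e^{2} \left( 1 + u^2 \right) \,,
\]
using $e^{2(\ln u)_+} \le 1 + u^2$. Therefore
\[
\int_{\Slxi} e^{2V_-/C_n}\,\diff\eta \le e^2 \left( |\Slxi| + \int_{\Slxi} u^2 \,\diff\eta \right) \,.
\]
Since $u \in \mathcal D \subset L^2(\Sph)$, the measure $u^2\,\diff\eta$ is a finite measure on $\Sph$, so it is absolutely continuous in the sense that $\int_{\Slxi} u^2 \to 0$ as the sets $\Slxi$ shrink to a point (their measures going to $0$), by dominated convergence. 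Likewise $|\Slxi| \to 0$. Consequently the right-hand side tends to $0$ as $\lambda \to 0$, and in particular is $< |\Sph|$ for all $\lambda$ small enough. Applying Lemma~\ref{lemma max principle narrow region} then yields $w_\lxi \ge 0$ a.e.\ on $\Slxi$, which is the claim.

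The only mild subtlety — and the step I would treat most carefully — is checking that $\Slxi$ genuinely shrinks nicely, i.e.\ that $|\Slxi| \to 0$ and more precisely that $\1_{\Slxi} \to 0$ pointwise a.e.\ on $\Sph$ as $\lambda \to 0$ (so that dominated convergence applies to $\int_{\Slxi} u^2$). This is immediate from the definitions: $B_\lambda(x_0)$ decreases to $\{x_0\}$ as $\lambda \downarrow 0$, and $\Ste$ is a diffeomorphism, so $\Slxi = \Ste(B_\lambda(x_0))$ decreases to $\{\xi_0\}$ and in particular has surface measure tending to $0$; the Jacobian of $\Ste$ is bounded near $x_0$, so one even gets $|\Slxi| = O(\lambda^n)$. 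No real obstacle arises here; the argument is essentially a uniform integrability observation combined with the already-proven maximum principle.
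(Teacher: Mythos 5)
Your proof is correct and follows the same route as the paper: both invoke the small volume maximum principle (Lemma~\ref{lemma max principle narrow region}) with the potential $V$ constructed in \eqref{inequality for w}, and both deduce the smallness condition $\int_\Omega e^{2V_-/C_n} < |\Sph|$ from the fact that $u\in L^2(\Sph)$ together with dominated convergence as $\lambda\to 0$. The only cosmetic difference is that you take $\Omega = \Slxi$ while the paper takes $\Omega = \Sigma_\lxi^-$; both choices satisfy the hypotheses, and your pointwise bound $e^{2V_-/C_n}\le e^2\bigl(1+u^2\bigr)$ on $\Slxi$ is in fact a slightly tidier way of handling the contribution of the region where $V_-=0$, which the paper's displayed equality tacitly omits (harmlessly, since that measure also tends to zero).
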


\begin{proof}
	We will apply Lemma \ref{lemma max principle narrow region} with $\Omega = \Sigma_\lxi^-$. As remarked before, $w_\lxi$ is antisymmetric. Assumption \eqref{assumptions on w narrow region mp 1} follows from \eqref{inequality for w} and Assumption \eqref{assumptions on w narrow region mp 2} follows by definition of $\Omega = \Sigma_\lxi^-$. Finally,
	$$
	\int_\Omega e^{2V_-/C_n} = e^2\int_{\{u > e^{-1}\}\cap \Sigma_\lxi^-} u^2 \leq e^2 \int_{\Sigma_\lxi^-} u^2 \,.
	$$
	Since $\1_{\Sigma_\lxi^-} \to 0$ a.e.~as $\lambda\to 0$ and $u\in L^2(\Sph)$, we deduce from dominated convergence that
	$$
	\int_\Omega e^{2V_-/C_n} < |\Sph|
	\qquad\text{for all sufficiently small}\ \lambda>0 \,.
	$$
	Thus, Lemma \ref{lemma max principle narrow region} implies that $w_\lxi\geq 0$ a.e.~on $\Sigma^-_{\lambda, \xi_0}$, so $|\Sigma^-_{\lambda, \xi_0}|=0$, which is the assertion of the lemma.
\end{proof}

Due to Lemma \ref{lemma starting the sphere}, the  `critical scale' associated to $\xi_0$,
\begin{equation}
\label{definition critical scale}
\lambda_0(\xi_0) := \sup \left \{ \lambda >0 \,:\, w_{\mu, \xi_0}(\eta) \geq 0 \text{ for all } 0 < \mu < \lambda \text{ and almost every }  \eta \in{\Sigma_{\mu,\xi_{0}}} \right\},
\end{equation}
is well-defined with $\lambda_0(\xi_0) \in (0, \infty]$.

\begin{proof}[Proof of Theorem \ref{theorem symmetry}]
We recall that $\xi_0\in\Sph\setminus\{S\}$ is fixed.

First, let us prove $\lambda_{0}(\xi_{0})<\infty$ by contradiction. Assuming that $\lambda_{0}(\xi_{0})=+\infty$, we can choose $\lambda_{i}>0$ with $\lambda_{i}\rightarrow{+\infty}$
and $u_{\lambda_{i},\xi_{0}}-u=w_{\lambda_{i},\xi_{0}}\geq0$ a.e.~on $\Sigma_{\lambda_{i},\xi_{0}}$. Integrating over $\Sigma_{\lambda_i, \xi_0}$ and changing variables, we obtain
\[ \int_{\Sigma_{\lambda_i, \xi_0}} u(\eta)^2 \diff \eta \leq \int_{\Sigma_{\lambda_i, \xi_0}} J_{\lambda_i, \xi_0}(\eta) u(\Phi_{\lambda_i, \xi_0} \eta)^2 \diff \eta = \int_{\Sph \setminus \Sigma_{\lambda_i, \xi_0}} u(\eta)^2 \diff \eta \,, \]
that is,
$$
\int_{\Sph \setminus \Sigma_{\lambda_i, \xi_0}} u(\eta)^2 \diff \eta
\geq \frac 12 \int_\Sph u(\eta)^2\diff\eta \,.
$$
Since $\1_{\Sph \setminus \Sigma_{\lambda, \xi_0}}\to 0$ a.e.~as $\lambda\to 0$ and $u\in L^2(\Sph)$, dominated convergence implies that the left side tends to zero as $i\to\infty$. This contradicts the assumption $u\not\equiv 0$. Thus, we have shown that $\lambda_0 := \lambda_{0}(\xi_{0})<\infty$.

Next, we prove that $w_{\lambda_0, \xi_0} \geq 0$ a.e.~on $\Sigma_{\lambda_0, \xi_0}$. By continuity of the map $\lambda \mapsto w_\lxi$ into $L^2(\Sph)$, we have, up to a subsequence, that $w_{\lambda,\xi_{0}}\rightarrow w_{\lambda_{0},\xi_{0}}$ a.e.~on $\Sigma_{\lambda_0, \xi_0}$ as $\lambda \nearrow \lambda_0$ from below. Consequently, by the definition of $\lambda_{0}$ we have $w_{\lambda_0, \xi_0} \geq 0$ a.e.~on $\Sigma_{\lambda_0, \xi_0}$.

Next, we claim that either $w_{\lambda_0, \xi_0}  \equiv 0$ or $w_{\lambda_0, \xi_0}  > 0$ a.e.~on $\Sigma_{\lambda_{0},\xi_{0}}$. We will deduce this from Lemma \ref{lemma strong max principle}. Assumption \eqref{assumptions on w strong mp 1} follows from \eqref{inequality for w} and we have already verified assumption \eqref{assumptions on w strong mp 2}. Finally, $V_+ w_{\lambda_0,\xi_0} \leq e^{-1}$ is bounded. Therefore Lemma \ref{lemma strong max principle} is applicable and yields the claimed dichotomy.

Finally, in order to show that $w_{\lambda_0, \xi_0}  \equiv 0$, we argue by contradiction and assume that $w_{\lambda_{0}, \xi_0} > 0$ a.e.~on $\Sigma_{\lambda_{0},\xi_{0}}$. Similarly as in the proof of Lemma \ref{lemma starting the sphere} we choose $\Omega = \Sigma_\lxi^-$ and bound, for $\lambda>\lambda_0$,
$$
\int_\Omega e^{2V_-/C_n} \leq e^2 \int_{\Sigma_{\lambda_0,\xi_0}\cap\{ w_{\lambda,\xi_0}<0\}} u^2 + e^2 \int_{\Sigma_{\lambda,\xi_0}\setminus \Sigma_{\lambda_0,\xi_0}} u^2 \,.
$$
Since $w_{\lambda,\xi_{0}}\rightarrow w_{\lambda_0,\xi_{0}}$ a.e.~on $\Sigma_{\lambda_{0},\xi_{0}}$ as $\lambda\to \lambda_0$ and $w_{\lambda_0,\xi_{0}}>0$ a.e.~on $\Sigma_{\lambda_{0},\xi_{0}}$, we have $\1_{\{w_{\lambda,\xi_{0}}<0\}} \rightarrow 0$ a.e.~on $\Sigma_{\lambda_{0},\xi_{0}}$ as $\lambda\to \lambda_0$. Moreover, clearly, $\1_{\Sigma_{\lambda,\xi_0}\setminus \Sigma_{\lambda_0,\xi_0}}\to 0$ a.e.~as $\lambda\searrow\lambda_0$. By dominated convergence these facts, together with $u\in L^2(\Sph)$, imply that
$$
\int_\Omega e^{2V_-/C_n} <|\Sph|
\qquad\text{for all sufficiently small}\ \lambda-\lambda_{0}>0 \,.
$$
The small volume maximum principle from Lemma \ref{lemma max principle narrow region} therefore implies that $w_\lxi \geq 0$
a.e.~on $\Sigma_{\lambda,\xi_{0}}$ for all sufficiently small $\lambda-\lambda_0>0$. This contradicts the definition of $\lambda_0(\xi_0)$ from \eqref{definition critical scale} and therefore proves that $w_{\lambda_0, \xi_0} \equiv 0$, as claimed.
\end{proof}


\section{Proof of the main result}
\label{section proof main}

In this section we use the symmetry of $u$ derived in Theorem \ref{theorem symmetry} via the method of moving spheres in order to deduce that $u$ must be of the form claimed in Theorem~\ref{theorem classification}. This will be a consequence of the symmetry result of Li and Zhu \cite{LiZh} in the generalized form stated in \cite{FrLi}. Actually, the theorem in \cite{FrLi} is for arbitrary finite measures, but we shall only quote a version for the case of measures which are absolutely continuous with respect to Lebesgue measure; see the remark after \cite[Theorem 1.4]{FrLi}.

\begin{theorem}
[{\cite[Theorem 1.4]{FrLi}}]
\label{theorem li zhu}
Let $v \in L^2(\R^n)$ be nonnegative. Assume that for any $x_0 \in \R^n$ there is a $\lambda > 0$ such that
\begin{equation}
\label{li zhu inversion}
v(x) = v_{I_{\lambda, x_0}}(x) \qquad \text{ for almost every } \quad x \in \R^n
\end{equation}
and for any $e \in \mathbb S^{n-1}$ there is an $\alpha \in \R$ such that
\begin{equation}
\label{li zhu reflection}
v(x) = v_{R_{\alpha,e}}(x) \qquad \text{ for almost every } \quad x \in \R^n.
\end{equation}
Then there are $a \in \R^n$, $b > 0$ and $c \geq 0$ such that
\begin{equation}
\label{li zhu classification}
v(x) = c \left( \frac{2b}{b^2 + |x-a|^2} \right)^{n/2} \,.
\end{equation}
\end{theorem}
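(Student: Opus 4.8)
The plan is to establish the statement in three steps. First, I would use the reflection hypothesis \eqref{li zhu reflection} together with the membership $v\in L^2(\R^n)$ to show that $v$ is radially symmetric about a point. Second, I would convert the inversion hypothesis \eqref{li zhu inversion} into a one-parameter family of identities for the radial profile of $v$. Third, I would solve this one-dimensional problem by the calculus argument that underlies the method of moving spheres of Li and Zhu \cite{LiZh} (see also \cite{LiZha}). The reason the classical statement cannot simply be quoted, and the place where the argument is genuinely delicate, is the low regularity: $v$ is only assumed to be in $L^2$, so the last step must be carried out in a weak, measure-theoretic formulation; this is exactly the extension achieved in \cite[Theorem 1.4]{FrLi}.

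\emph{Step 1 (radial symmetry).} If $v\equiv 0$ the conclusion holds with $c=0$, so assume $v\not\equiv 0$. Fix an orthonormal basis $e_1,\dots,e_n$ and, by \eqref{li zhu reflection}, positions $\alpha_i$ such that $v$ is symmetric about $\{x\cdot e_i=\alpha_i\}$. These $n$ mutually orthogonal hyperplanes meet in a single point $\bar x$; after a translation we may assume $\bar x=0$, so $\alpha_1=\dots=\alpha_n=0$, $v$ is even in each coordinate and, in particular, $v(-x)=v(x)$ a.e. Now let $e\in\mathbb S^{n-1}$ be arbitrary with associated position $\alpha$, and let $R$ be the reflection about $\{x\cdot e=\alpha\}$ and $J(x):=-x$, both of which leave $v$ invariant. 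A direct computation gives $(R\circ J\circ R\circ J)(x)=x+4\alpha e$, so $v$ is invariant under translation by $4\alpha e$; since $v\in L^2(\R^n)$ this is impossible unless $\alpha=0$. Hence every hyperplane in \eqref{li zhu reflection} passes through the origin, and since the reflections about hyperplanes through the origin generate $O(n)$, the function $v$ is $O(n)$-invariant: $v(x)=f(|x|)$ with $f\geq 0$ measurable and $\int_0^\infty f(r)^2 r^{n-1}\,\diff r<\infty$.

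\emph{Step 2 (reduction to one dimension).} Applying \eqref{li zhu inversion} with $x_0=0$ gives $\lambda_0:=\lambda(0)>0$ with $f(r)=(\lambda_0/r)^n f(\lambda_0^2/r)$ for a.e.\ $r>0$; equivalently, in the variable $t=\ln r$, the function $\psi(t):=e^{nt/2}f(e^t)$ lies in $L^2(\R)$, is nonnegative, and satisfies $\psi(t)=\psi(2\ln\lambda_0-t)$. Replacing $v$ by its pullback under the dilation $x\mapsto\lambda_0 x$ — which preserves all hypotheses and only rescales the parameters in \eqref{li zhu classification} — we may assume $\lambda_0=1$, so that $\psi$ is even. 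For a general center take $x_0=\rho e_1$, $\rho\in\R$: the inversion $I_{\lambda(\rho),\rho e_1}$ maps the line $\R e_1$ to itself via $s\mapsto\rho+\lambda(\rho)^2/(s-\rho)$, and a Fubini argument exploiting the radial symmetry of $v$ (write points as $\rho e_1+t\omega$, let $\omega$ tend to $\pm e_1$ along directions for which the a.e.\ identity persists, and use continuity in $L^2_{\mathrm{loc}}$ of composition with the resulting diffeomorphisms in the radial variable) shows that \eqref{li zhu inversion} descends to the one-dimensional inversion identity
\[
F(s)=\left(\frac{\lambda(\rho)}{|s-\rho|}\right)^{n} F\!\left(\rho+\frac{\lambda(\rho)^2}{s-\rho}\right),
\qquad s\in\R\setminus\{\rho\},
\]
for the even function $F(s):=f(|s|)$, valid for every $\rho\in\R$.

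\emph{Step 3 (solving the one-dimensional problem; the main obstacle).} A short computation shows that the displayed identity holds for the profile $(1+s^2)^{-n/2}$ (up to a positive constant) with, and only with, $\lambda(\rho)^2=1+\rho^2$. The strategy is therefore to prove that for our $F$ the quantity $\lambda(\rho)^2-\rho^2$ is independent of $\rho$, hence equal to $\lambda(0)^2=1$; this is the crux. It is established by the Li--Zhu moving-spheres analysis: for each center $\rho$ one studies the radii $\mu$ for which the reflected copy $F_{I_{\mu,\rho}}$ stays below $F$ outside the corresponding ball, shows via a strong maximum principle that the supremal such radius $\bar\lambda(\rho)$ actually yields an equality $F_{I_{\bar\lambda(\rho),\rho}}=F$, and then extracts from this one-parameter family of identities, by a calculus argument, that $\bar\lambda(\rho)^2-\rho^2$ is constant. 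It is here that the mere $L^2$ regularity of $F$ bites, and the resolution is to run this analysis in the weak, measure-valued formulation of \cite{FrLi} rather than with pointwise values. Granting $\lambda(\rho)^2=1+\rho^2$ for every $\rho$, the function $F$ is invariant, as a density of weight $n/2$, under each map $s\mapsto\rho+(1+\rho^2)/(s-\rho)$; the composition of two such maps, with parameters $\rho$ and $\sigma$, carries $\rho$ to $\sigma$, so these maps generate a group acting transitively on $\R$ and preserving the density $(1+s^2)^{-n/2}\,|\diff s|^{n/2}$, whence $F(s)=c\,(1+s^2)^{-n/2}$ a.e.\ for some $c\geq 0$. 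Undoing the dilation of Step 2 and the translation of Step 1 turns this into \eqref{li zhu classification}.
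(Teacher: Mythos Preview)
The paper does not give its own proof of this theorem; it is quoted verbatim from \cite[Theorem~1.4]{FrLi} and used as a black box in the proof of Theorem~\ref{theorem classification}. So there is no in-paper argument to compare against, and your proposal should be judged as a standalone proof sketch.

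Your Step~1 is clean and correct: the composition $R\circ J\circ R\circ J$ is indeed the translation by $4\alpha e$, and the $L^2$ constraint kills $\alpha$, forcing all reflection hyperplanes through a common point and hence radial symmetry.

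Step~2 is where the trouble begins. The reduction to one dimension works without issue only for the center $x_0=0$, because the inversion about the origin commutes with the radial structure. For an off-center inversion about $\rho e_1$, the identity $v=v_{I_{\lambda(\rho),\rho e_1}}$ holds only almost everywhere on $\R^n$, and the $e_1$-axis has measure zero; your ``Fubini argument \dots\ let $\omega$ tend to $\pm e_1$ \dots\ use continuity in $L^2_{\mathrm{loc}}$ of composition'' is not an argument but a hope. The inversion $I_{\lambda,\rho e_1}$ does not preserve spheres about the origin, so there is no obvious way to pass from the $n$-dimensional a.e.\ identity to your displayed one-dimensional identity for $F$ without additional regularity of $f$ --- which is exactly what you do not have.

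Step~3 contains a genuine conceptual confusion. You invoke ``the Li--Zhu moving-spheres analysis'' with a ``strong maximum principle'' to show that the supremal radius $\bar\lambda(\rho)$ gives an equality. But there is no equation here and no maximum principle to apply: the equality $v=v_{I_{\lambda(\rho),x_0}}$ is the \emph{hypothesis}, not something to be derived. The Li--Zhu lemma is a rigidity statement deducing the explicit form from given symmetries; its proof in the smooth case proceeds by differentiating the inversion identity in the center (or by composing two inversions and analysing the resulting M\"obius map), not by any maximum principle. You then say that in low regularity ``the resolution is to run this analysis in the weak, measure-valued formulation of \cite{FrLi}'' --- but \cite[Theorem~1.4]{FrLi} \emph{is} the theorem you are supposed to be proving, so this is circular. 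The actual work, namely showing that the one-parameter family of inversion symmetries forces $\lambda(x_0)^2-|x_0|^2$ to be constant when $v$ is merely $L^2$, is precisely the content of the cited result and is not supplied by your sketch.
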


We can now give the proof of our main result.

\begin{proof}
[Proof of Theorem \ref{theorem classification}]
From Theorem \ref{theorem symmetry} we deduce immediately that the function $v = u_\Ste$ (in the notation of \eqref{definition u Phi}) satisfies the assumptions of Theorem \ref{theorem li zhu}. Therefore, $v$ is of the form \eqref{li zhu classification} for some $a \in \R^n$, $b > 0$ and $c \geq 0$. A computation shows that
$$
u(\omega) = c \left( \frac{\sqrt{1-|\zeta|^2}}{1-\zeta\cdot\omega} \right)^{n/2}
$$
with a certain $\zeta\in\R^{n+1}$ with $|\zeta|<1$ which is given explicitly in terms of $a$ and $b$; see the discussion after \eqref{eq:optsphere}. Thus, there is a conformal mapping $\Phi$ on $\Sph$ (corresponding via stereographic projection to translation by $a$ and dilation by $b$ on $\R^n$) such that
$$
u = c J_\Phi^{1/2} = c \1_\Phi \,.
$$
Here $\1$ is the function on $\Sph$ which is constant one and $\1_\Phi$ refers to notation \eqref{definition u Phi}. By equation \eqref{equation 1} and its conformal invariance given in \eqref{conf transf H}, we have
$$
C_n u \ln u = H u = c H \1_\Phi = c \left( (H\1)_\Phi + C_n \1_\Phi \ln J_\Phi^{1/2} \right) = C_n u \ln \frac{u}{c} \,.
$$
This implies $c=1$ and concludes the proof of the theorem.
\end{proof}


\end{document}